\definecolor{webgreen}{rgb}{0,.5,0}
\definecolor{webbrown}{rgb}{.6,0,0}
\DeclareMathOperator{\Flatten}{Flatten}
\DeclareMathOperator{\LrMax}{LrMax}
\DeclareMathOperator{\lrmax}{lrmax}
\DeclareMathOperator{\LwMp}{WLrMax}
\DeclareMathOperator{\lwmp}{wlrmax}
\DeclareMathOperator{\Rlmin}{RlMin}
\DeclareMathOperator{\RGF}{RGF}
\DeclareMathOperator{\blocks}{blocks}
\DeclareMathOperator{\run}{run}
\DeclareMathOperator{\rlmin}{rlmin}
\newcommand{\seqnum}[1]{\href{https://oeis.org/#1}{\rm \underline{#1}}}
\begin{document}

\theoremstyle{plain}
\newtheorem{theorem}{Theorem}
\newtheorem{corollary}[theorem]{Corollary}
\newtheorem{lemma}[theorem]{Lemma}
\newtheorem{proposition}[theorem]{Proposition}

\theoremstyle{definition}
\newtheorem{definition}[theorem]{Definition}
\newtheorem{example}[theorem]{Example}
\newtheorem{conjecture}[theorem]{Conjecture}

\theoremstyle{remark}
\newtheorem{remark}[theorem]{Remark}

	\begin{center}
		{\large \bf Merging-Free Partitions and Run-Sorted Permutations}\\
		\bigskip
		Fufa Beyene\footnote{Corresponding author.}\\
		Department of Mathematics, Addis Ababa University\\
		P.O. Box 1176\\ Addis Ababa, Ethiopia\\
			\href{mailto: fufa.beyene@aau.edu.et}{\tt fufa.beyene@aau.edu.et}\\
			\ \\
		Roberto Mantaci	\\
		IRIF, Universit\'e de Paris\\8, Place Aur\'elie Nemours, 75013\\ Paris, France\\
			\href{mailto: mantaci@irif.fr}{\tt mantaci@irif.fr}
	\end{center}
	
	\setcounter{page}{1} \thispagestyle{empty}
	
	\begin{abstract}
	In this paper, we study merging-free partitions with their canonical forms and run-sorted permutations. We give a combinatorial proof of the conjecture made by Nabawanda et al. We describe the distribution of the statistics of runs and right-to-left minima over the set of run-sorted permutations and we give the exponential generating function for their joint distribution. We show the number of right-to-left minima is given by the shifted distribution of the Stirling number of the second kind. We also prove that the non-crossing merging-free partitions are enumerated by powers of $2$. We use one of the constructive  proofs  given in the paper to implement an algorithm for the exhaustive generation  of run-sorted permutations by number of runs.	
\end{abstract}

	\section{Introduction}
	Given a non-empty finite subset $A$ of positive integers, a \emph{set partition} $P$ of $A$ is a collection of disjoint non-empty subsets $A_i$ called \emph{blocks} of $A$ such that $\cup_{i=1}^kA_i=A$ \cite{Bo,Ma}.
	We shall use the notation $[n]:=\{1, 2, \ldots, n\}$, where $n$ is a fixed positive integer. It is well known that set partitions over $[n]$ and set partitions over $[n]$ having $k$ blocks are counted by the Bell numbers, $b_n$ and Stirling numbers of the second kind, $S(n,k)$ respectively (\cite{Bo,Ro,St1}). 
	Mansour \cite{Ma} defined the \emph{block representation} of a set partition where the elements in a block are arranged increasingly and the blocks are arranged in increasing order of their first elements. Mansour also gave a way to encode a set partition (in its block representation) by its \emph{canonical form}, that is, every integer is encoded by the  number of the block it belongs to. We note that canonical forms of set partitions coincide with the so-called \emph{restricted growth functions} ($\RGF$).	
	
	Callan \cite{Ca} introduced the ``flattening'' operation ($\Flatten$) on set partitions, which acts in such a way that a permutation $\sigma$ is obtained from a set partition $P$ by removing the separators enclosing the different blocks of $P$ in its block representation.
	For example, the set partition $P=126/3/48/57$ is in the block representation, and so we remove the separators $``/"$ and obtain the permutation $\sigma=12634857$. As a result of Callan's work, such objects are getting the attention of different researchers and several new findings are emerging, for example see  \cite{Al-Na,Ma-Sh-Wa,Na-Ra}. 
	
	In the literature permutations obtained this way are sometimes called ``flattened partitions''. We found this term somewhat confusing because these objects are permutations and not partitions, consequently, since the runs of the resulting permutations are sorted by the increasing values of their respective minima, we chose to adopt the term {\em  run-sorted permutations} already used by Alexandersson and Nabawanda \cite{Al-Na}. Run-sorted permutations are counted by the shifted Bell numbers (see \cite{Na-Ra}).

	The same permutation can be obtained by flattening several set partitions. For instance, the permutation $\sigma=12634857$ can also be  obtained by flattening the set partition $P'=126/348/57$. Among all the set partitions having the same $\Flatten$, we will distinguish the only one whose number of blocks is the same as the number of runs of the permutation obtained by flattening it (this is the set partition $P'$ for the permutation $\sigma$). For obvious reasons we  named these objects {\em merging-free partitions}. The $\Flatten$ operation clearly becomes injective and hence a bijection if restricted to the set of merging-free partitions.
	
	In this article we study some properties of run-sorted permutations as well as of merging-free partitions and their canonical forms, we compute the distribution of some statistics (runs, right-to-left minima, $\ldots$) over these sets, we relate these classes to the classes of separated partitions and of non-crossing partitions and we provide an exhaustive generation algorithm for the class of run-sorted permutations partitioned by number of runs. In particular, in Section \ref{secflatrun}, we give the characterization of the canonical forms of merging-free partitions, and show that they can be bijectively related to $\RGF$s of one size smaller.
	
	In Section \ref{sec3}, we give a combinatorial bijective proof of a recurrence relation in Theorem \ref{rrNaRa} satisfied by run-sorted permutations over $[n]$ having $k$ runs, a recurrence relation that was conjectured by Nabawanda et al.\ \cite{Na-Ra}. We also give the interpretation of the proof of the same result by working on the canonical forms of merging-free partitions. 
	
	In Section \ref{secrlmin}, we prove that the distribution of right-to-left minima over run-sorted permutations is the same as the distribution of the number of blocks over set partitions of one size smaller (and also given by the shifted Stirling number of the second kind). We refine the recurrence relation satisfied by the number of run-sorted permutations over $[n]$ having $k$ runs by counting these permutations by number of runs and by number of right-to-left minima simultaneously and we obtain an exponential generating function for the associated three-variables formal series. Munagi \cite{Mu} proved that the set partitions over $[n]$ having $k$ blocks such that no two consecutive integers in the same block are also counted by the shifted Stirling numbers of the second kind. So, in this section we also show that these partitions bijectively correspond to run-sorted permutations over $[n]$ having $k$ right-to-left minima. 
	
	Non-crossing partitions are Catalan enumerated objects introduced in the founding work of Becker \cite{Be} and later deeply studied by different eminent scholars like Kreweras and Simion (\cite{Kr, Si}). We characterize the class of non-crossing merging-free partitions in Section \ref{secNoncross} and we enumerate them according to their number of blocks and show that the total number of such partitions is counted by the power of $2$. 
	
	Finally, Section \ref{secalgo} presents an exhaustive generation algorithm for run-sorted permutations partitioned by the number of runs, based on the recurrence relation proved in Theorem \ref{rrNaRa} and using the classical dynamic programming techniques.
	
	\subsection{Definitions, Notation, and Preliminaries} 
	\begin{definition}
		A \textit{set partition} $P$ of $[n]$ is defined as a collection $B_1, \ldots, B_k$ of nonempty disjoint subsets such that $\cup_{i=1}^kB_i=[n]$. The subsets $B_i$ will be referred to as ``blocks''. 
	\end{definition}
	\begin{definition}
		A set partition $P=B_1/\cdots/B_k$ is said to be the \emph{block representation} of $P$ if the blocks $B_1, \ldots, B_k$ are sorted in such way that $\min (B_1) < \min (B_2) <\cdots < \min (B_k)$ and the elements of every block are arranged in  increasing order.
		\label{def1}
	\end{definition}
	We will always write set partitions in their block representation. Let $\mathcal{SP}(n)$ denote the set of all set partitions over $[n]$ and $b_n=|\mathcal{SP}(n)|$ the $n$-th Bell number.
	\begin{definition} 
		The \emph{canonical form} of a set partition of $[n]$ is a $n$-tuple indicating the block in which each integer occurs, that is, $f= f_1f_2\cdots f_n$ such that $j\in B_{f_j}$ for all $j$ with $1\leq j\leq n$. 
	\end{definition}
	\begin{example}
		If $P=138/2/47/56\in\mathcal{SP}(n)$, then its canonical form is $f=12134431$.
	\end{example}
	\begin{definition}
		A \emph{restricted growth function} $(\RGF)$ over $[n]$ is a function $f :[n]\mapsto[n]$, where $f=f_1\cdots f_n$ such that $f_1=1$ and $f_i \leq 1+\max\{f_1,\ldots, f_{i-1}\}$ for $2\leq i\leq n$, or equivalently, such that the set $\{f_1, f_2, \ldots, f_i\}$ is an integer interval for all $i\in[n]$. 
	\end{definition}
	The canonical forms of set partitions are exactly the restricted growth functions ($\RGF$). We let $\RGF(n)$ denote the set of all restricted growth functions over $[n]$. We will note $f\in \RGF(n)$ as a word $f_1f_2\cdots f_n$ over the alphabet $[n]$, where $f_i=f(i)$.
	We define the statistic of the set of left-to-right maxima of $f$ by $$\LrMax(f)=\{i~ : f_i>f_j, 1\leq i \leq n, j<i\},$$
	and a statistic of the set of weak left-to-right maxima of $f$ by $$\LwMp(f)=\{i :~f_i\geq f_j, 1\leq i \leq n, j<i\},$$ We also use the notation $\lrmax(f):=|\LrMax(f)|$ and $\lwmp(f):=|\LwMp(f)|$.
	\begin{example} 
		If $f=121132342\in\RGF(9)$, then observe that $\LrMax(f)=\{1, 2, 5, 8\}$ and $\LwMp(f)=\{1, 2, 5, 7, 8\}$.
	\end{example}
	\begin{definition}
		Let $P=B_1/\cdots/B_k$  be a set partition. We say that $P$ is merging-free if $\max(B_i)>\min(B_{i+1}), 1\leq i\leq k-1$. 
	\end{definition}
	A permutation $\pi$ over $[n]$	will be represented in the one-line notation, $\pi=\pi_1\pi_2\cdots\pi_n$. In particular, every permutation can be considered as a word of length $n$, with letters in $[n]$. We define the set of right-to-left minima of $\pi$ by $$\Rlmin(\pi)=\{\pi_i~: ~\pi_i<\pi_j, j>i\},$$ and we use the notation $\rlmin(\pi):=|\Rlmin(\pi)|$.
	\begin{definition}
		A maximal increasing subsequence of consecutive letters in the word of a permutation $\pi$ is called a run.
	\end{definition}
	\begin{definition}
		\emph{Flattening} a set partition is an operation by which we obtain a permutation from the set partition $P=B_1/\cdots/B_k$ over $[n]$ by concatenating its blocks. We denote the resulting permutation by $\Flatten(P)=\pi$. 
	\end{definition}
	If a permutation is obtained by flattening a set partition, then its runs are ordered in such a way that the minimum of the runs are increasing, therefore, we will call all permutations in $\Flatten(\mathcal{SP}_n)$ \emph{run-sorted permutations}. We let $\mathcal{RSP}(n)$ denote the set of all run-sorted permutations over $[n]$ and $r_n$ is its cardinality. 
\begin{remark}
	Merging-free partitions over $[n]$ and run-sorted permutations over $[n]$ are in bijection, because the restriction of $\Flatten$ to the merging-free partitions is a bijection.
\end{remark}
 Nabawanda et al.\ \cite{Na-Ra} proved the following result.
 \begin{proposition} 
		\label{prop0}
		The number of set partitions over $[n]$ and the number of run-sorted permutations over $[n+1]$ (and therefore, the number of merging-free partitions over $[n+1]$) are equal. That is, $r_{n+1}=b_n$ for all $n\geq1$.\end{proposition}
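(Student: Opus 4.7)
The plan is to prove $r_{n+1} = b_n$ by induction on $n$, by showing that $r_{n+2}$ satisfies the Bell-number recurrence
\[
r_{n+2} \;=\; \sum_{k=0}^{n}\binom{n}{k}\,r_{k+1}.
\]
Together with the trivial initial identity $r_2 = 1 = b_1$ and the classical recurrence $b_{n+1} = \sum_{k=0}^{n}\binom{n}{k}\,b_k$, this gives $r_{n+1}=b_n$ for all $n \geq 1$ by induction.

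To establish the recurrence combinatorially, I decompose an arbitrary $\pi = \pi_1\pi_2\cdots\pi_{n+2} \in \mathcal{RSP}(n+2)$ according to the value of $\pi_2$. Since the first run begins with the overall minimum, $\pi_1 = 1$ and hence $\pi_2 \geq 2$. When $\pi_2 = 2$, I set $\pi' = (\pi_2-1,\pi_3-1,\ldots,\pi_{n+2}-1)$, a permutation of $[n+1]$ whose descents are those of $\pi$ from position $2$ onward. The runs of $\pi'$ are therefore obtained from those of $\pi$ by removing the initial $1$ and subtracting $1$ from every entry; its sequence of run minima is $(1,\min(R_2)-1,\min(R_3)-1,\ldots)$, strictly increasing because $2 \in R_1$ forces $\min(R_2) \geq 3$. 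So $\pi' \in \mathcal{RSP}(n+1)$, and the inverse map $\pi'\mapsto(1,\pi'_1+1,\pi'_2+1,\ldots,\pi'_{n+1}+1)$ shows this case is bijective with $\mathcal{RSP}(n+1)$, contributing $r_{n+1}$.

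When $\pi_2 \geq 3$, the element $2$ does not belong to $R_1$, and being the smallest element of $[n+2]\setminus R_1$ it must satisfy $2 = \min(R_2)$. Writing $R_1 = \{1\}\cup S$ in increasing order with $S \subseteq\{3,\ldots,n+2\}$ and $|S|=s\geq 1$, there are $\binom{n}{s}$ choices for $S$, and the required descent $\max(R_1) > \min(R_2) = 2$ is automatic since $\max(S)\geq 3$. The suffix of $\pi$ after $R_1$ is a run-sorted sequence $\tau$ on $\{2\}\cup(\{3,\ldots,n+2\}\setminus S)$ beginning with $2$; the unique order-preserving bijection of this set onto $[n+1-s]$ sends $\tau$ bijectively to an arbitrary element $\tau'\in\mathcal{RSP}(n+1-s)$. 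Summing over both cases and substituting $k = n-s$ in the second sum yields
\[
r_{n+2} \;=\; r_{n+1} \;+\; \sum_{s=1}^{n}\binom{n}{s}\,r_{n+1-s} \;=\; \sum_{k=0}^{n}\binom{n}{k}\,r_{k+1},
\]
which is exactly the required recurrence.

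The main obstacle is verifying bijectivity of the Case B decomposition: one must check that reassembling $R_1 = \{1\}\cup S$ with the inverse relabeling of an arbitrary $\tau' \in \mathcal{RSP}(n+1-s)$ produces a permutation that again lies in $\mathcal{RSP}(n+2)$ and has $\pi_2 \geq 3$. This reduces to the descent condition $\max(S) > 2$, which I have already noted, together with the fact that an order-preserving relabeling preserves descents and the linear ordering of run minima; both points are routine once the bookkeeping of run starts and run minima is made explicit.
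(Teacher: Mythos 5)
Your argument is correct, but it takes a genuinely different route from the paper. The paper proves $r_{n+1}=b_n$ by exhibiting an explicit bijection $\mathcal{SP}(n)\to\mathcal{RSP}(n+1)$: move each block minimum to the end of its block, flatten, increase every entry by $1$, and prepend $1$ (the inverse cuts after each right-to-left minimum). You instead show that the numbers $r_{n+1}$ satisfy the Bell recurrence $r_{n+2}=\sum_{k=0}^{n}\binom{n}{k}r_{k+1}$ by splitting $\pi\in\mathcal{RSP}(n+2)$ according to whether $2$ lies in the first run (your Case A, contributing $r_{n+1}$) or heads the second run (your Case B, where the first run is $\{1\}\cup S$ and the standardized suffix is an arbitrary smaller run-sorted permutation), and then conclude by induction; this is the ``block containing the distinguished element'' proof of the Bell recurrence transported to run-sorted permutations, and all the key verifications (that $2=\min(R_2)$ in Case B, that $\max(S)\geq 3$ guarantees the needed descent, that order-preserving relabeling preserves runs and run minima) are present or routine as you indicate. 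The trade-off: your proof is self-contained and elementary, but it only delivers the numerical identity, whereas the paper's bijection is reused later (e.g., to show that the number of blocks of the partition corresponds to the number of right-to-left minima of the permutation, shifted by one, which yields the Stirling-number refinement in Section 4); your recursive decomposition does not directly track those statistics.
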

	\begin{proof} We give a sketch of the proof. Let $P=B_1/\cdots/B_k\in\mathcal{SP}(n)$.
	The corresponding run-sorted permutation in $\mathcal{RSP}(n+1)$ is constructed as follows: move each minimum element of the block at the end of its block, remove the slashes, increase every integer by 1, and finally attach the integer 1 at the front.
	Conversely, we construct the set partition over $[n]$ corresponding to a run-sorted permutation $\pi\in \mathcal{RSP}(n+1)$ as follows. Put a slash after each right-to-left minimum of $\pi$, then delete the integer $1$, and decrease every integer by 1, finally arrange the elements of each block in increasing order.\end{proof}
	\begin{example}
		If $P=14/258/37/6\in\mathcal{SP}(8)$, then by the above operation we obtain the run-sorted permutation $\pi=152693847\in \mathcal{RSP}(9)$.
		Conversely, for $\pi=152693847\in \mathcal{RSP}(9)$ we have $\Rlmin(\pi)=\{1, 2, 3, 4, 7\}$. So by putting a slash after each right-to-left minimum we obtain $1/52/693/84/7\longrightarrow14/258/37/6=P$.
	\end{example}
	\section{Canonical forms of merging-free partitions}
	\label{secflatrun}
	In this section we characterize the $\RGF$s  corresponding to merging-free partitions and we present some results related to these canonical forms.
\begin{remark}
	Let $f=f_1\cdots f_n$ be the canonical form of a set partition $P=B_1/\cdots/B_k$ over $[n]$ having $k$ blocks. We have $i\in \LrMax(f)$ if and only if $i=\min(B_{f_i})$.
\end{remark}
	\begin{proposition}
		\label{CharSubFlat}
		There is a bijection between the set of merging-free partitions over $[n]$ and the set $T_n$ of $\RGF$s $f=f_1f_2\cdots f_n$ over $[n]$ satisfying the condition that every left-to-right maximum letter $s>1$ of $f$ has at least one occurrence of $s-1$ on its right.
	\end{proposition}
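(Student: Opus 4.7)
The plan is to take the bijection that the paper already cites between set partitions and $\RGF$s, namely the canonical-form map $P \mapsto f$, and simply verify that under this map the merging-free condition on $P$ is equivalent to the condition defining $T_n$. So the work is not to construct a map but to translate one combinatorial condition into another.

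First I would unpack what the left-to-right maxima of an $\RGF$ mean. For $P = B_1/\cdots/B_k$ with canonical form $f$, the remark preceding the proposition says $i \in \LrMax(f)$ precisely when $i = \min(B_{f_i})$. Because the $\RGF$ property forces a new maximum to exceed the previous one by exactly $1$, the sequence of \emph{values} at the left-to-right maximum positions of $f$ is exactly $1, 2, \ldots, k$, read in order. In particular, for each $s$ with $1 \le s \le k$, the unique left-to-right maximum with value $s$ sits at position $\min(B_s)$.

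Next I would translate the merging-free condition. By definition $P$ is merging-free iff $\max(B_j) > \min(B_{j+1})$ for every $j$ with $1 \le j \le k-1$. Setting $s = j+1$, this is equivalent to the statement that, for every $s > 1$, the block $B_{s-1}$ contains an element strictly larger than $\min(B_s)$. On the side of $f$, ``an element of $B_{s-1}$'' is the same thing as ``a position $i'$ with $f_{i'} = s-1$,'' and ``strictly larger than $\min(B_s)$'' means ``strictly to the right of the left-to-right maximum position carrying value $s$.'' So the merging-free condition on $P$ is exactly the condition defining $T_n$ for $f$.

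Hence the canonical-form bijection restricts to a bijection from merging-free partitions of $[n]$ onto $T_n$. The only thing that needs a little care is the bookkeeping in the translation, in particular the fact that the values attained by left-to-right maxima in an $\RGF$ are precisely the consecutive integers $1, 2, \ldots, k$, so that ``the left-to-right maximum with value $s$'' is well-defined and sits at $\min(B_s)$; there is no genuine obstacle beyond this.
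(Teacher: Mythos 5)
Your proposal is correct and follows essentially the same route as the paper: both use the canonical-form bijection and translate the merging-free condition $\max(B_{s-1})>\min(B_s)$ into the existence of an occurrence of $s-1$ to the right of the left-to-right maximum at position $\min(B_s)$ carrying value $s$. Your write-up is if anything slightly more explicit about the equivalence being two-way.
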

	\begin{proof}
		If $P=B_1/B_2/\cdots /B_k$ is a merging-free partition with $k$ blocks, then $\min(B_{s-1})<\min(B_{s})$ and $\max(B_{s-1})>\min(B_s), ~s=2, \ldots, k$. Note that every leftmost occurrence of a letter in $f$ is a left-to-right maximum letter. The positions of the leftmost and rightmost occurrences of the letter $s$ in $f$ correspond to the minimum and the maximum elements of the block $B_s$, respectively. Thus, if $1<s\leq k$, then $f_{\min(B_s)}=s$ and ~$f_{\max(B_{s-1})}=s-1$.
	\end{proof}
	\begin{definition}
	Let $f=f_1f_2\cdots f_n\in \RGF(n)$. If the occurrence of the letter $f_i$ in $f$ has no repetition, then we say that $f_i$ is unique in $f$, that is, $f_i$ is unique if and only if $i$ forms a singleton block in the partition. A weak left-to-right maximum $i$ in $f$ for which there exists $i_0<i$ such that $f_i=f_{i_0}$ is called a non-strict left-to-right maximum.
	\end{definition} 
	We shall give a combinatorial proof of Proposition \ref{prop0} in terms of canonical forms. 
	For each $i\in[n]$ we define $u_i$ as the number of unique left-to-right maximum letters of $f$ in the positions $1, \ldots, i-1$ that are smaller than $f_i$. We let $u=(u_1, \ldots, u_n)$. Let $\delta=(\delta_1, \ldots, \delta_n)$, where
	\begin{displaymath}\delta_i=
	\begin{cases}
		1, &\text{ if  $f_i$ is a non-unique left-to-right maximum letter of  $f$;}\\
		0, &\text{ otherwise}.
	\end{cases}\end{displaymath}
	
	Define a mapping $\alpha: \RGF(n)\mapsto T_{n+1}$, where $T_{n+1}$ is the set of canonical forms of merging-free partitions over $[n+1]$, by $\alpha(f)=1\cdot f'$, that is, a concatenation of $1$ and $f'$, where $f'=f'_1\cdots f'_n$ is obtained from $f$ as follows:
	\begin{equation*}
		f'=f-u+\delta.\end{equation*}
	\begin{example}
		If $f=1213124$, then $f_1=1$ and $f_2=2$ are non-unique left-to-right maximum letters, while $f_4=3$ and $f_7=4$ are the unique ones. So, $u=(0, 0, 0, 0, 0, 0, 1)$ and $\delta=(1, 1, 0, 0, 0, 0, 0)$. Thus, $ f'=f-u+\delta=2313123$ and $\alpha(f)=1\cdot f'=12313123\in T_8$.
	\end{example}	
\begin{lemma}\label{lemmaltrmax}
	If $f\in \RGF(n)$ and $f'$ is obtained from $f$ as in the above construction, then $\LrMax(f)\subseteq\LwMp(f')$.
\end{lemma}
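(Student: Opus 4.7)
The plan is to unpack the formula $f'_j = f_j - u_j + \delta_j$ so that $u_j$ can be replaced by a simpler quantity depending only on the value $f_j$. Since $f$ is a restricted growth function, its left-to-right maximum values $1,2,3,\ldots$ are introduced one at a time in strictly increasing order. Consequently, every unique left-to-right maximum of $f$ whose value is strictly less than $f_j$ must occur at some position before $j$, whether or not $f_j$ itself is a left-to-right maximum. Writing $U(v)$ for the total number of unique left-to-right maximum values of $f$ strictly smaller than $v$, this gives the uniform description $f'_j = f_j - U(f_j) + \delta_j$ for every $j$, and the function $U$ is non-decreasing with $U(v+1) - U(v) \in \{0,1\}$.

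Now I would fix $i \in \LrMax(f)$ and any $j < i$. Since $f_i$ is a strict left-to-right maximum of $f$, we have $f_i > f_j$, hence $f_i - f_j \geq 1$. Writing out the difference,
\[
f'_i - f'_j = (f_i - f_j) - \bigl(U(f_i) - U(f_j)\bigr) + (\delta_i - \delta_j),
\]
and it suffices to show this is $\geq 0$. The middle term counts the integers $\ell$ with $f_j \leq \ell < f_i$ that are unique left-to-right maximum values of $f$, so it is at most $f_i - f_j$ in general.

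If $\delta_j = 0$, then $\delta_i - \delta_j \geq 0$ and the desired inequality reduces to $(f_i - f_j) - (U(f_i) - U(f_j)) \geq 0$, which is immediate from the crude bound above. If $\delta_j = 1$, then $f_j$ is a non-unique left-to-right maximum of $f$, so in particular $f_j$ is not a unique left-to-right maximum value; hence $\ell = f_j$ is excluded from the count, yielding the sharper bound $U(f_i) - U(f_j) \leq f_i - f_j - 1$. This gives $f'_i - f'_j \geq 1 + (\delta_i - \delta_j) \geq 0$, covering the last case.

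The main obstacle I anticipate is precisely the bookkeeping in the bound $U(f_i) - U(f_j) \leq f_i - f_j$: one must use the non-unique status of $f_j$ exactly when $\delta_j = 1$ in order to absorb the potential $-1$ contribution coming from $\delta_i - \delta_j$. Once the two-case analysis is arranged, the verification is essentially a one-line inequality in each case.
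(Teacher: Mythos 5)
Your proof is correct, and it takes a genuinely different route from the paper's. The key extra observation you supply is that $u_j$ depends only on the value $f_j$ and not on the position $j$: since $f\in\RGF(n)$ introduces the values $1,2,3,\ldots$ in order of first occurrence, the single occurrence of any unique value $\ell<f_j$ is its first occurrence and therefore lies strictly to the left of the first occurrence of $f_j$, hence in positions $1,\ldots,j-1$. This legitimizes the rewriting $f'_j=f_j-U(f_j)+\delta_j$ and reduces the lemma to the inequality $U(f_i)-U(f_j)\le f_i-f_j-\delta_j$, which you verify correctly in both cases (when $\delta_j=1$ the value $f_j$ is non-unique, so $\ell=f_j$ does not contribute to the count, buying the needed extra $1$). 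The paper instead argues by induction along the sequence $i_1<\cdots<i_k$ of left-to-right maxima of $f$, computing that $f'_{i_k}$ equals $f'_{i_{k-1}}$ or $f'_{i_{k-1}}+1$ according to whether $f_{i_k}$ is unique or not, and then disposes of the positions strictly between consecutive left-to-right maxima with only a brief remark. Your argument compares $f'_i$ with $f'_j$ for \emph{every} $j<i$ at once, so it treats those intermediate positions explicitly and is, if anything, more complete on that point; the paper's induction, in exchange, yields the sharper local identity $f'_{i_k}-f'_{i_{k-1}}=\delta_{i_k}$, which is reused in the next lemma to show $\alpha(f)\in T_{n+1}$.
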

\begin{proof}
	Let $\LrMax(f)=\{i_1, \ldots i_k\}$.
	We proceed by induction on $k$. For $k=1$, note that $i_1=1$ and $f_1'=f_1-u_1+\delta_1=f_1+\delta_1\in\{1,2\}$. Thus, the assertion is true for the basis step. Suppose that the assertion is true for $k-1$, and we show that $i_k$ is a weak left-to-right maximum in $f'$. By definition we have
	$$u_{i_k}=\begin{cases} u_{i_{k-1}}+1, &\text{ if } f_{i_{k-1}} \text{ is unique;}\\u_{i_{k-1}}, &\text{ if } f_{i_{k-1}} \text{ is non-unique.}
	\end{cases} $$ 
	If $f_{i_k}$ is unique, then $\delta_{i_k}=0$ and in either cases we have \begin{align*}f_{i_k}'=f_{i_k}-u_{i_k}=f_{i_{k-1}}'.
\end{align*} 
If $f_{i_k}$ is non-unique, then $\delta_{i_k}=1$ and in either cases we have \begin{align*}f_{i_k}'=f_{i_k}-u_{i_k}+1=f_{i_{k-1}}'+1.
\end{align*} 
Therefore, in all of the above cases we have $f_{i_k}'\geq f_{i_{k-1}}'$. For the intermediate values, we already know them to be non-weak left-to-right maximum letters, hence they are not greater than $f_{i_{k-1}}'$. Thus, by using the induction hypothesis we see that $i_k$ is a weak left-to-right maximum in $f'$. \end{proof}
\begin{lemma}
	For all $f\in \RGF(n)$ we have that $\alpha(f)\in T_{n+1}$.
\end{lemma}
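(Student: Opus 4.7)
My plan is to verify the two defining properties of $T_{n+1}$ for the word $\alpha(f) = 1 \cdot f'$ of length $n+1$: that it is a valid $\RGF$, and that every strict left-to-right maximum letter $s > 1$ of $\alpha(f)$ has an occurrence of $s-1$ strictly to its right.

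For the $\RGF$ property, I would build on Lemma \ref{lemmaltrmax} and the computation inside its proof. Writing $\LrMax(f) = \{i_1, \ldots, i_k\}$, that computation shows that for $l \geq 2$ one has $f'_{i_l} - f'_{i_{l-1}} \in \{0, 1\}$, with the increment equal to $1$ precisely when $f_{i_l}$ is a non-unique left-to-right maximum, while $f'_{i_1} = 1 + \delta_1 \in \{1, 2\}$. So along the ltr-max positions of $f$ the values $f'_{i_l}$ are nondecreasing and jump by at most one each step. The remaining task is to bound the values at non-ltr-max positions: if $j$ is not a ltr-max of $f$ and $i_l$ is the last ltr-max position $\leq j$, I need to show that $f'_j < f'_{i_l}$.

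The key calculation is this. Since $f$ is an $\RGF$, $f_j = f_{i_r}$ for some $r \leq l$, and because $f_{i_r}$ has another occurrence (at $j$) it is non-unique, so $\delta_{i_r} = 1$. Moreover $u_j = u_{i_r}$: the ltr-max letters of $f$ are strictly increasing, so any ltr-max of $f$ strictly between $i_r$ and $j$ must exceed $f_{i_r} = f_j$ and hence is not counted in $u_j$, while the ltr-max letters before $i_r$ contribute the same total to both quantities. Therefore
\[
f'_j = f_j - u_j = f_{i_r} - u_{i_r} = f'_{i_r} - 1 < f'_{i_r} \leq f'_{i_l},
\]
which, combined with the previous paragraph, yields the $\RGF$ inequality for $\alpha(f)$.

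For the merging-free condition I plan to use the same identity in the opposite direction. The strict left-to-right maxima of $\alpha(f)$ consist of the initial position (value $1$, requiring no check) together with those positions $i_l + 1$ for which $f_{i_l}$ is a non-unique ltr-max of $f$; at such a position the value is some $s = f'_{i_l} > 1$. The non-uniqueness of $f_{i_l}$ supplies some index $j > i_l$ with $f_j = f_{i_l}$, and the identity above gives $f'_j = s - 1$. Hence the required letter $s - 1$ appears at position $j + 1$ of $\alpha(f)$, strictly to the right of the strict ltr-max at position $i_l + 1$. I expect the main point of difficulty to be establishing $u_j = u_{i_r}$ cleanly; once that bookkeeping identity is in place, both halves of the argument unwind in parallel.
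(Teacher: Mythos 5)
Your argument is correct and follows essentially the same route as the paper's proof: the central identity $f'_j=f'_{i_r}-1$ for a non-left-to-right-maximum position $j$ whose letter first occurs at $i_r$ (equivalently, $u_j=u_{i_r}$ and $\delta_j=0$) is exactly the computation the paper uses to verify the merging-free condition. The only difference is that you spell out the $\RGF$ property of $1\cdot f'$, which the paper dismisses as ``easy to see'' from Lemma~\ref{lemmaltrmax}; your bookkeeping for $u_j=u_{i_r}$ and the resulting bound $f'_j<f'_{i_l}$ is a sound way to fill in that step.
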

\begin{proof}
	By the above lemma it is easy to see that $1\cdot f'\in\RGF(n+1)$. Let $f'=f_1'f_2'\cdots f_n'$ and let $f_i'>1$ be a left-to-right maximum letter in $1\cdot f'$, then $f_i$ is a non-unique left-to-right maximum letter in $f$. This implies that there is some $j>i$ such that $f_i=f_j$, and hence $u_i=u_j, \delta_j=0$. So, $\alpha(f(i))=f'_i=f_i-u_i+1=f_j'+1$. Therefore, every left-to-right maximum letter $s>1$ of $1\cdot f'$ has some occurrence $s-1$ from its right and hence $1\cdot f'\in T_{n+1}$. \end{proof}
	
	We now define a map $\beta
	: T_{n+1}\mapsto \RGF(n)$ which associates each $1\cdot g=1\cdot g_1\cdots g_n\in T_{n+1}$ with a function $\beta(1\cdot g)=g'=g'_1g'_2\cdots g'_n$, where $g'$ is obtained from $g$ as follows. For each $i\in [n]$, we let $v_i$ denote the number of non-strict left-to-right maximum letters in $g$ that are less than or equal to $g_i$ in the positions $1, \ldots, i-1$. Let $v=(v_1, \ldots, v_n)$. Further, let $\delta'=(\delta'_1, \ldots, \delta'_n)$, where
	\begin{displaymath} \delta'_i=
	\begin{cases}
		1, &\text{ if $g_i$ is left-to-right maximum of $g$};\\
		0, &\text{ otherwise}.
	\end{cases}\end{displaymath}
	Then, $g'$ is obtained from $g$ as follows:
	$$ g'=g+v-\delta'.$$
	For instance, if $1\cdot g=122134321\in T_9$, then $g=22134321, v=(0, 0, 0, 1, 1, 1, 1, 0)$, and $\delta'=(1, 0, 0, 1, 1, 0, 0, 0)$. Thus, $g'=12134431\in\RGF(8)$. Note that $\beta=\alpha^{-1}$ and	as a result, we have the following proposition.
	\begin{proposition}
		The mapping $\alpha$ from the set $\RGF(n)$ to the set $T_{n+1}$ is a bijection. \qed
	\end{proposition}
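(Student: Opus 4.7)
The plan is to verify that the map $\beta$ constructed above is well-defined as a map $T_{n+1} \to \RGF(n)$ and serves as a two-sided inverse of $\alpha$; bijectivity then follows at once. The key structural observation is that $\alpha$ transports two natural classifications into each other: in $f \in \RGF(n)$, the left-to-right maxima partition into the \emph{unique} ones (those whose letter has no later repetition, i.e., singleton blocks) and the \emph{non-unique} ones; in $1 \cdot g \in T_{n+1}$, the weak left-to-right maxima partition into strict and non-strict ones. I would show that $\alpha$ matches these two classifications bijectively, with unique left-to-right maxima of $f$ corresponding to non-strict weak left-to-right maxima of $\alpha(f)$ and non-unique ones to strict left-to-right maxima.

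To verify $\beta \circ \alpha = \mathrm{id}_{\RGF(n)}$, I would first sharpen Lemma~\ref{lemmaltrmax} to an equality: the positions of left-to-right maxima of $f$ coincide (after the index shift for the prepended $1$) with those of weak left-to-right maxima of $\alpha(f)$. The case analysis already in the proof of Lemma~\ref{lemmaltrmax}---non-unique maxima contribute $f'_{i_k} = f'_{i_{k-1}} + 1$ (strict increase) while unique ones contribute $f'_{i_k} = f'_{i_{k-1}}$ (tie)---establishes the finer matching between the two dichotomies. Combined with the observation that the set counted by $u_i$ (unique left-to-right maxima $j < i$ of $f$ with $f_j < f_i$) corresponds, through the identity $f'_j = f_j - u_j$ at unique positions, to the set counted by $v_i$ (non-strict weak left-to-right maxima $j<i$ of $\alpha(f)$ with $f'_j \le f'_i$), we deduce $\delta'_i = \delta_i$ and $v_i = u_i$ at every index. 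Substituting gives
\[
\beta(\alpha(f))_i \;=\; f'_i + v_i - \delta'_i \;=\; (f_i - u_i + \delta_i) + u_i - \delta_i \;=\; f_i.
\]

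The reverse identity $\alpha \circ \beta = \mathrm{id}_{T_{n+1}}$ follows by the symmetric argument: when recovering $f$ from $1\cdot g$, the defining condition of $T_{n+1}$ (each strict left-to-right maximum $s>1$ has an occurrence of $s-1$ to its right) guarantees that after $\beta$ the corresponding position becomes a non-unique left-to-right maximum of $f$, closing the loop of the correspondence; similarly, non-strict weak left-to-right maxima of $1\cdot g$ produce unique left-to-right maxima of $f$. The main obstacle will be the careful bookkeeping around the boundary index $i=1$, where the prepended letter disrupts the symmetry between the two dichotomies, together with the verification that non-left-to-right-maximum positions of $f$ produce no spurious weak left-to-right maxima in $\alpha(f)$---a claim that requires comparing $f'_i$ to $f'_j$ for the most recent left-to-right maximum $j<i$ of $f$ and controlling how the quantities $u_i$ and $u_j$ differ between them.
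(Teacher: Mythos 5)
Your proposal is correct and takes essentially the same route as the paper: the paper likewise establishes bijectivity by exhibiting the explicit map $\beta$ and asserting $\beta=\alpha^{-1}$ (with the equality $\LrMax(f)=\LwMp(f')$ recorded as Corollary~\ref{corltmax}). You supply more of the verification detail than the paper does---in particular the matching of unique/non-unique left-to-right maxima of $f$ with non-strict/strict weak left-to-right maxima of $\alpha(f)$ and the resulting identities $v_i=u_i$, $\delta'_i=\delta_i$---but the underlying argument is the same.
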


\begin{corollary}\label{corltmax}
	If $f\in \RGF(n)$ and $\alpha(f)=1\cdot f'$, then $\LrMax(f)=\LwMp(f')$. \qed\end{corollary}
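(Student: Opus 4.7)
The plan is to combine Lemma~\ref{lemmaltrmax}, which already yields $\LrMax(f)\subseteq \LwMp(f')$, with a direct proof of the reverse inclusion. I will argue contrapositively: given an index $i\in[n]$ that is not a strict left-to-right maximum of $f$, I will exhibit an explicit index $j<i$ with $f'_j>f'_i$, which forces $i\notin \LwMp(f')$.

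Given such an $i$, note that $i\geq 2$ since position $1$ is always a strict left-to-right maximum of any $\RGF$. Let $i_k$ be the largest element of $\LrMax(f)$ strictly smaller than $i$, and write $M=f_{i_k}=\max(f_1,\ldots,f_{i-1})$ and $c=f_i\leq M$. Because $i\notin\LrMax(f)$ we have $\delta_i=0$, so the identity $f'=f-u+\delta$ reduces the task to verifying
\[
f'_{i_k}-f'_i \;=\; (M-c)-(u_{i_k}-u_i)+\delta_{i_k} \;\geq\; 1,
\]
and I will split on whether $c=M$ or $c<M$.

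If $c=M$, then $f_i$ itself witnesses that $f_{i_k}$ is non-unique, so $\delta_{i_k}=1$; meanwhile the only strict left-to-right maximum in $[i_k,i-1]$ is $i_k$, whose value $M$ fails to be strictly less than $f_i=M$, so $u_i=u_{i_k}$ and the displayed expression equals $1$. If $c<M$, then $u_{i_k}-u_i$ counts unique strict left-to-right maxima of $f$ whose values lie in $[c,M-1]$; each such value is realised at exactly one strict left-to-right maximum position (its first occurrence in $f$), but the one carrying the value $c$ cannot be unique because $f_i=c$ supplies a second occurrence. This gives $u_{i_k}-u_i\leq(M-c)-1$, and the displayed expression is again at least $1$.

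The delicate step is this second subcase: the naive bound $u_{i_k}-u_i\leq M-c$ would only yield $f'_{i_k}\geq f'_i$, which is insufficient to rule out $i\in\LwMp(f')$. The key observation I rely on is that the first occurrence of the value $c$ is an earlier strict left-to-right maximum of $f$ that is automatically non-unique as soon as $f_i=c$ repeats it, and excluding precisely this position from the count of unique maxima is what upgrades the inequality to the required strict one; the remaining bookkeeping (that values in $[c,M-1]$ first appear at strict left-to-right maxima preceding $i_k$) is routine.
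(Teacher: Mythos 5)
Your argument is correct. Both cases check out: for $i\notin\LrMax(f)$ one indeed has $\delta_i=0$; when $f_i=M$ the repetition at position $i$ forces $\delta_{i_k}=1$ and $u_{i_k}=u_i$; and when $f_i=c<M$ the $\RGF$ property guarantees that $c$ already occurs among $f_1,\ldots,f_{i-1}$, so $c$ is non-unique and the count of unique values in $[c,M-1]$ is at most $M-c-1$, which is exactly the strict improvement needed. The paper offers no written proof of this corollary: it is stated with a \qed{} as an immediate consequence of Lemma~\ref{lemmaltrmax} together with the fact that $\beta=\alpha^{-1}$, the intended point being that the inverse map subtracts $\delta'$ precisely at the left-to-right maxima of $g=f'$, so a weak left-to-right maximum of $f'$ can only arise from a strict left-to-right maximum of $f$. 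Your route is genuinely different and arguably more satisfying: rather than appealing to the inverse map, you prove the reverse inclusion $\LwMp(f')\subseteq\LrMax(f)$ by a direct quantitative sharpening of the computation in Lemma~\ref{lemmaltrmax}, exhibiting an explicit witness $i_k<i$ with $f'_{i_k}>f'_i$. What you correctly identify as the delicate point --- that the naive bound $u_{i_k}-u_i\leq M-c$ only gives a weak inequality, and that the non-uniqueness of the value $c$ itself is what upgrades it to a strict one --- is precisely the content that the paper leaves implicit, so your version is a self-contained verification of a step the authors did not spell out.
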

	We now evaluate the number $l_n$ of $f\in\RGF(n)$ having the sequence $u=0$, that is, if $f=f_1f_2\cdots f_n$, then for each $i\in[n]$ there is no unique left-to-right maximum letter smaller than $f_i$ on its left. The set partitions corresponding to such functions are exactly those satisfying the condition that their blocks have size at least two except for the last block, which may be singleton. The sequence of the numbers $l_n, n\geq0$ is the same as the OEIS \seqnum{A346771}.
	\begin{theorem}
		For all $n\geq1$ we have 
		\begin{equation}\label{thmzerou}
		l_n=\sum_{k=1}^{n-1} {n-1 \choose k} l_{n-k-1}, l_0=l_1=1.
		\end{equation}
	\end{theorem}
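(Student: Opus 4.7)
The plan is to give a direct combinatorial proof by conditioning on the size of the first block. Recall from the discussion just before the theorem that $l_n$ counts set partitions $P = B_1/\cdots/B_r$ of $[n]$ (in block representation) such that every block has size at least $2$ with the single exception that the last block $B_r$ is allowed to be a singleton. The base cases $l_0=l_1=1$ match the empty partition and the singleton partition $\{1\}$ respectively.

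For $n \ge 2$ I would first verify that $|B_1| \ge 2$ in every partition counted by $l_n$. Indeed, if $P$ has only one block, then $B_1 = [n]$ has size $n \ge 2$; and if $P$ has more than one block, then $B_1$ is not the last block, so by definition $|B_1| \ge 2$. This shows the sum starts at $k=1$.

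Next I would condition on $|B_1| = k+1$ with $1 \le k \le n-1$. Since $1 \in B_1$ always, the remaining $k$ elements of $B_1$ are chosen freely from $\{2,3,\ldots,n\}$, contributing a factor of $\binom{n-1}{k}$. Let $S = [n] \setminus B_1$, a set of size $n-k-1$. Writing the induced partition on $S$ and relabeling its elements by the unique order-preserving bijection onto $[n-k-1]$ yields some $P' \in \mathcal{SP}(n-k-1)$. The key observation is that the condition defining $l_n$ is preserved: the blocks of $P'$ are exactly $B_2,\ldots,B_r$ (relabeled), the last of these is $B_r$ (still possibly a singleton), and the others $B_2,\ldots,B_{r-1}$ are interior blocks of $P$, so each has size $\ge 2$. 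Conversely, any partition $P' \in \mathcal{SP}(n-k-1)$ counted by $l_{n-k-1}$ can be prepended with a first block consisting of $1$ together with any chosen $k$-subset of $\{2,\ldots,n\}$ to yield a partition counted by $l_n$. When $k=n-1$ the set $S$ is empty and the empty partition contributes $l_0=1$, which correctly accounts for the one-block partition $B_1=[n]$.

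Summing over the allowable values of $k$ gives
\begin{equation*}
l_n \;=\; \sum_{k=1}^{n-1} \binom{n-1}{k}\, l_{n-k-1},
\end{equation*}
as required. The only subtle point — and the place I would be most careful — is checking that the bijective correspondence preserves the ``last block may be a singleton'' exception in both directions; once this is made precise the recurrence falls out immediately, with no induction or generating-function machinery required.
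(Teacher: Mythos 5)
Your proof is correct and is essentially the paper's own argument: the paper conditions on the number of occurrences of the letter $1$ in the canonical form $f$ (deleting all $1$s and decreasing the remaining letters), which is exactly your conditioning on $|B_1|=k+1$ and removal of the first block, viewed through the block/RGF correspondence. The one point you flag as subtle --- preservation of the ``last block may be a singleton'' condition --- is handled correctly in your write-up and is implicit in the paper's phrase ``with the same condition as $f$''.
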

\begin{proof}
	Let $f\in\RGF(n)$ satisfy the above condition. Since $f_1=1$ is the smallest integer, every such function has at least two $1$s. Suppose that $f$ has $k+1$ occurrences of $1$s. If we delete all the $1$s and decrease each of the remaining integers by $1$, then we obtain a $\RGF$ over $[n-k-1]$, with the same condition as $f$. So, there are $l_{n-k-1}$ such functions. We now  choose $k$ positions from $\{2, 3, \ldots, n\}$ where $1$ can be inserted, and this is possible in $\displaystyle {n-1 \choose k}$ ways. Therefore, by applying the product rule and then taking the sum over all possible $k$ we have the right hand side of (\ref{thmzerou}).
\end{proof}
	\section{Run distribution in run-sorted permutations}
	\label{sec3}
	The following table presents the first few values $r_{n, k}$ of the number of run-sorted permutations over $[n]$ having $k$ runs (see \seqnum{A124324}).
	\begin{table}[h!]
		\centering
		\begin{tabular}{c|ccccccc} 
			$n\backslash k$ & 1 & 2 & 3 & 4 & 5 & 6 & 7\\
			\hline 1 & 0 & & & & & &\\
			2 & 1 & 0 & & & & &\\
			3 & 1 & 1 & 0 & & & &\\
			4 & 1 & 4 & 0 & 0 & & &\\
			5 & 1 & 11 & 3 & 0 & 0 & &\\
			6 & 1& 26 & 25 & 0 & 0 & 0 &\\
			7 & 1 & 57 & 130 & 15 & 0& 0 & 0\\
			\hline 
		\end{tabular}
		\caption{The values of $r_{n, k}$ for $1\leq k,n\leq 7$}
	\end{table}
	
	\begin{remark}
		The number $k$ of runs of a run-sorted permutation over $[n]$ satisfies the condition $$1\leq k\leq\lceil\frac{n}{2}\rceil,~ n\geq1,$$
		because each run except the last has length at least $2$.
	\end{remark}
	
	The following result was conjectured by Nabawanda et al.\ \cite{Na-Ra}, who also gave a justification of the first term of the right-hand side of (\ref{eq1}). We were the first to provide a combinatorial bijective proof justifying the second term and thus prove the conjecture. We show a complete combinatorial proof of the conjecture here using our bijection.
	
	\begin{theorem}
		\label{rrNaRa}
		The number $r_{n, k}$ of run-sorted permutations of $[n]$ having $k$ runs satisfies the recurrence relation
		\begin{equation}\label{eq1}
			r_{n, k}=kr_{n-1, k}+(n-2)r_{n-2, k-1},~~n\geq2, ~k\geq1,\end{equation}
		where $r_{0, 0}=1, ~r_{1, 0}=0, ~r_{1, 1}=1$.
	\end{theorem}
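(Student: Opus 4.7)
The plan is to combinatorially split $\mathcal{RSP}(n,k)$ into two pieces according to how the run count behaves when the letter $n$ is deleted. For $\pi\in\mathcal{RSP}(n,k)$, let $m$ be the position of $n$; then $n$ sits at the end of some run $R_j$ and $m\neq 1$ (since otherwise the first run $\{n\}$ would have a minimum exceeding all later minima). If $m=n$, deletion shortens the last run and preserves the run count. If $m<n$, the adjacent runs $R_j$ and $R_{j+1}$ merge upon deleting $n$ precisely when $\pi_{m-1}<\pi_{m+1}$. I call \emph{Class A} the permutations for which deletion preserves the run count (that is, $m=n$, or $m<n$ with $\pi_{m-1}>\pi_{m+1}$), and \emph{Class B} the complement.

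The Class A contribution produces the first term of \eqref{eq1}. The deletion $\pi\mapsto\pi'$ is a surjection from Class A onto $\mathcal{RSP}(n-1,k)$, and its inverse is ``insert $n$ at the end of one of the $k$ runs of $\rho\in\mathcal{RSP}(n-1,k)$''. Since appending $n$ never alters run-minima and the letter starting the next run of $\rho$ is already smaller than $n$, every such insertion lies in Class A; this gives $k$ preimages per $\rho$ and accounts for $k\,r_{n-1,k}$ permutations.

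For Class B, I would build a bijection $\phi\colon B\to\mathcal{RSP}(n-2,k-1)\times[n-2]$. Given $\pi\in B$, set $b=\pi_{m+1}=\min(R_{j+1})\in\{2,\ldots,n-1\}$. Delete both $n$ and $b$ from $\pi$ and relabel by subtracting $1$ from every letter greater than $b$; using $\pi_{m-1}<b$, together with $b<\pi_{m+2}$ whenever $R_{j+1}$ has length at least two, the remnants of $R_j$ and $R_{j+1}$ combine into a single increasing run of the resulting word $\sigma$, all other runs persist unchanged, and the run-minima remain strictly increasing, so $\sigma\in\mathcal{RSP}(n-2,k-1)$. I set $\phi(\pi)=(\sigma,b-1)$. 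The inverse sends $(\sigma,j)$ to the permutation built as follows: put $b=j+1$, relabel $\sigma$ to $\sigma'$ by adding $1$ to each letter $\geq b$, let $p$ be the last position of a letter smaller than $b$ in $\sigma'$, and insert $n$ then $b$ immediately after position $p$. Because every letter of $\sigma'$ to the right of $p$ exceeds $b$ while the letter at position $p$ is smaller than $b$, the inserted $b$ opens a new run whose minimum $b$ slots strictly between two consecutive run-minima of $\sigma'$, preserving the run-sorted property and increasing the run count from $k-1$ to $k$.

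The main obstacle is the run-bookkeeping for the Class B bijection: one has to verify carefully that the deletion step really drops the run count by exactly one (with special attention to the case where $R_{j+1}$ is the singleton last run, in which case $R_{j+1}$ simply disappears) while keeping the relabeled run-minima strictly increasing, and dually that the insertion step always lands on the correct run to split so that the image is genuinely a Class B permutation with $k$ runs. Once these invariants are confirmed, $\phi$ is a bijection, whence $|B|=(n-2)\,r_{n-2,k-1}$; combined with $|A|=k\,r_{n-1,k}$ and the initial conditions $r_{0,0}=r_{1,1}=1$, $r_{1,0}=0$, this yields the recurrence \eqref{eq1}.
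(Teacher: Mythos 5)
Your proposal is correct and follows essentially the same route as the paper: the same split of $\mathcal{RSP}(n,k)$ according to whether deleting $n$ preserves or decreases the run count, the same ``insert $n$ at the end of a run'' bijection for the first term, and the same ``delete $n$ and the following run-minimum $b$, then standardize'' bijection (the paper's map $\psi$, just written in the inverse direction) for the second term. The run-bookkeeping you flag as the remaining obstacle is exactly what the paper's Lemma \ref{lemmacase2} and Proposition \ref{bijectionpsi} carry out, and your sketch of it (including the singleton last-run case) is sound.
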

	In order to prove this result, we partition the set $\mathcal{RSP}(n,k)$ of run-sorted permutations over $[n]$ having $k$ runs into two subsets:
	$\mathcal{RSP}^{ (1)}(n,k)$ and $\mathcal{RSP}^{ (2)}(n,k)$, where $\mathcal{RSP}^{ (1)}(n,k)$ is the set of elements of $\mathcal{RSP}(n,k)$ in which the removal of the integer $n$ does not decrease the number of runs and	$\mathcal{RSP}^{ (2)}(n,k)$ is the set of elements of $\mathcal{RSP}(n,k)$ in which the removal of the integer $n$ decreases the number of runs, this happens when the integer $n$ occurs between two integers $x$ and $y$ with $x<y$. For example, $12435\in \mathcal{RSP}^{(1)}(5, 2)$ and $15234\in \mathcal{RSP}^{(2)}(5, 2)$.	We will denote the cardinalities of these subsets by $r^{ (1)}_{n,k}$  and $r^{(2)}_{n,k}$, respectively. 
	
	Let $\phi: [k]\times\mathcal{RSP}(n-1,k)\mapsto\mathcal{RSP}^{ (1)}(n,k)$	associating each element $(i,\sigma)\in[k]\times\mathcal{RSP}(n-1,k)$ with the permutation $\sigma'=\phi(i,\sigma)$ obtained from $\pi$ by inserting $n$ at the end of the $i$-th run of the permutation $\sigma$. It is easy to see that $\phi$ is a bijection (see \cite{Na-Ra}, p.\ 6).
	
	We now define the mapping $\psi:[n-2]\times \mathcal{RSP}(n-2, k-1)\mapsto \mathcal{RSP}^{(2)}(n, k)$, associating each element $(i, \pi)\in [n-2]\times \mathcal{RSP}(n-2, k-1)$ with the permutation $\pi'=\psi(i, \pi)$ obtained from $\pi$ by increasing all integers greater than $i$ by 1 and inserting the subword $n~ i{+}1$ immediately after the rightmost of the integers of the set $\{1, 2, \ldots, i\}$. 
	\begin{example} Let $i=3$ and $\pi=13524\in \mathcal{RSP}(5, 2)$. We construct $\psi(i, \pi)$ as follows: increase each integer greater than 3 in $\pi$ by $1$ to get $13625$, then insert the subword $7~(3{+}1)=7~4$ into the position after the rightmost of the integers $1, 2, 3$, thus the subword must be inserted between $2$ and $5$, hence, $\psi(3, 13524)=\pi'=1362745\in \mathcal{RSP}^{(2)}(7, 3)$.
	\end{example}
	\begin{lemma}
		\label{lemmacase2}
		For all $(i, \pi)\in [n-2]\times \mathcal{RSP}(n-2, k-1)$, we have $\psi(i,\pi)\in \mathcal{RSP}^{(2)}(n,k)$.
	\end{lemma}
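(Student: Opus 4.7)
The plan is to verify three properties of $\pi' = \psi(i,\pi)$: (i) it is a permutation of $[n]$, (ii) it is run-sorted with exactly $k$ runs, and (iii) deleting $n$ strictly decreases its number of runs. Property (i) is immediate, since shifting upward by $1$ the letters of $\pi$ that exceed $i$ yields a permutation $\pi^{(1)}$ of the set $\{1,\ldots,i,i+2,\ldots,n-1\}$, and inserting the two missing letters $n$ and $i+1$ completes $[n]$.

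For (ii) and (iii) I will analyze the position $p$ where the subword $n\,(i+1)$ is inserted, namely the rightmost position in $\pi^{(1)}$ carrying a letter $v_p\leq i$. By construction every letter strictly to the right of $p$ belongs to $\{i+2,\ldots,n-1\}$. The key structural observation is a dichotomy: either $p$ lies strictly inside some run $R_j$ of $\pi^{(1)}$, or $p$ is the last position of the whole word (which forces $i=n-2$). Indeed, if $p$ were the last letter of a run $R_j$ followed by a run $R_{j+1}$, then $v_p,v_{p+1}$ would have to form a descent, whereas $v_p\leq i<i+2\leq v_{p+1}$ is an ascent.

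In the interior case, the insertion turns the substring $\ldots,v_p,v_{p+1},\ldots$ of $R_j$ into $\ldots,v_p,n,i+1,v_{p+1},\ldots$; the inequalities $v_p<n$, $n>i+1$, and $i+1<v_{p+1}$ split $R_j$ into a prefix run ending with $n$ and a suffix run beginning with $i+1$. Combined with the $k-2$ untouched runs of $\pi^{(1)}$, this gives $\pi'$ exactly $k$ runs, with minima $m_1<\cdots<m_j<i+1<m_{j+1}<\cdots<m_{k-1}$; the new inequalities hold because $m_j\leq v_p\leq i$ and $m_{j+1}\geq i+2$. In the boundary case $i=n-2$, the letters $n$ and $n-1$ are simply appended: $n$ extends the final run of $\pi^{(1)}$, and $n-1$ forms a new singleton run whose minimum exceeds all earlier minima, once more giving a run-sorted permutation with $k$ runs.

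Finally, deleting $n$ from $\pi'$ leaves the ascending pair $v_p,i+1$ in place of the former descent, so the two runs straddling $n$ merge and the run count drops to $k-1$; hence $\pi'\in\mathcal{RSP}^{(2)}(n,k)$. The one delicate step is the dichotomy about the location of $p$; once it is established, the remainder reduces to routine bookkeeping on the inequalities between $v_p$, $i$, $i+1$, and the minima of the adjacent runs.
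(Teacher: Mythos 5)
Your argument follows essentially the same route as the paper's: insert the subword $n\,(i+1)$ after the rightmost letter at most $i$, observe that everything to its right exceeds $i+1$ so that $i+1$ opens exactly one new run, check that the run minima stay increasing, and note that deleting $n$ leaves the ascent $v_p,\,i+1$ and therefore merges two runs. The paper's proof is a terser version of exactly this, so your extra bookkeeping (the split of $R_j$ into a prefix ending with $n$ and a suffix beginning with $i+1$, and the chain $m_j\le v_p\le i<i+1<i+2\le m_{j+1}$) is added detail rather than a different method.

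There is, however, one false assertion: the parenthetical claim that the boundary case ``$p$ is the last position of the word'' forces $i=n-2$. It does not. Take $\pi=132\in\mathcal{RSP}(3,2)$ and $i=2$, so $n=5$: then $\pi^{(1)}=142$, the rightmost letter $\le 2$ is the final letter $2$, and yet $i=2\neq n-2=3$ (here $\psi(2,132)=14253$). The boundary case occurs exactly when the last letter of $\pi$ is at most $i$, which can happen for many values of $i$; conversely $i=n-2$ does imply the boundary case, so you have the implication backwards. Since your treatment of that case is phrased only for $i=n-2$ (``the letters $n$ and $n-1$ are simply appended''), the instances with $p$ at the end of the word and $i<n-2$ are not literally covered. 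The repair is immediate and uses nothing new: in every boundary case the appended letters are $n$ and $i+1$; the letter $n$ extends the final run because the last letter of $\pi^{(1)}$ is at most $i<n$; and $i+1$ forms a new final singleton run whose minimum exceeds all earlier run minima, since the minimum of the last run of $\pi^{(1)}$ is at most $v_p\le i$. With that correction the proof is complete.
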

	\begin{proof}
		Since $\pi\in \mathcal{RSP}(n-2,k-1)$ and the procedure inserts the subword $n~i{+}1$ immediately after the rightmost integer of the set $\{1, \ldots, i\}$, all integers to the right of $i+1$ are greater than $i+1$ and $i+1$ is the first element of a new run. Thus the resulting permutation is run-sorted with the number of runs increased by $1$. Furthermore, in the resulting permutation the integer $n$ is immediately preceded by some integer in the set $\{1, \ldots, i\}$ and immediately followed by $i+1$, hence its removal decreases the number of runs, so $\pi'\in \mathcal{RSP}^{(2)}(n,k)$.
	\end{proof}
	\begin{proposition}
		\label{bijectionpsi}
		The map $\psi$ defined above is a bijection.
	\end{proposition}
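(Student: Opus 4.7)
My plan is to prove the bijection by exhibiting an explicit inverse. Given $\pi'\in\mathcal{RSP}^{(2)}(n,k)$, let $p$ be the position of $n$ in $\pi'$. Since $n$ is the maximum letter it must terminate a run, so $p\geq 2$; and since removing $n$ must decrease the number of runs, $p\leq n-1$ and the letters $x:=\pi'_{p-1}$ and $y:=\pi'_{p+1}$ satisfy $x<y$. I would define $\psi^{-1}(\pi'):=(i,\pi)$, where $i:=y-1$ and $\pi$ is obtained from $\pi'$ by deleting the subword $n\,y$ and then decreasing by one every letter strictly greater than $y$.

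First I would check that $\psi^{-1}$ lands in $[n-2]\times\mathcal{RSP}(n-2,k-1)$. The index $i=y-1$ lies in $[n-2]$ because $1\leq x<y\leq n-1$, and $\pi$ is a permutation of $[n-2]$ since the two deleted letters are precisely $n$ and $i+1$. For the run count, removing $n$ from $\pi'$ merges the two runs adjacent to $n$ (this is the hypothesis $x<y$), dropping the count from $k$ to $k-1$. Removing $y$ next leaves the count unchanged: either $y$ is the last letter of $\pi'$, so only the tail of the merged run is trimmed, or the letter $z$ immediately to the right of $y$ satisfies $z>y$ (since $y$ began a run of $\pi'$), so $x<y<z$ and the run stays increasing after $y$ is gone. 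Run-sortedness is preserved because the new sequence of run minima is obtained from that of $\pi'$ by removing the entry $y$ and then applying the order-preserving relabelling.

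The two compositions $\psi\circ\psi^{-1}$ and $\psi^{-1}\circ\psi$ then follow from a single key observation: in $\pi$, the letter $x$ is the rightmost letter belonging to $\{1,\dots,i\}$. This is where I would exploit run-sortedness---every run of $\pi'$ lying to the right of the run starting at $y=i+1$ has minimum strictly greater than $i+1$, and within the run starting at $y$ every letter other than $y$ itself also exceeds $i+1$. Hence every letter to the right of $x$ in $\pi'$ other than $n$ and $y$ exceeds $i+1$, and after the shift by $-1$ they all fall in $\{i+1,\dots,n-2\}$, leaving $x$ as the rightmost letter of $\{1,\dots,i\}$ in $\pi$. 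With this identification, $\psi(\psi^{-1}(\pi'))$ reinserts $n\,(i+1)$ immediately after $x$ and recovers $\pi'$, while $\psi^{-1}(\psi(i,\pi))$ reads $y=i+1$ off the position right after $n$ and reverses the relabelling to return $(i,\pi)$.

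I expect the main obstacle to be the careful run-count bookkeeping in the well-definedness step, where one must cleanly separate the merging effect of removing $n$ from the shrinking effect of removing $y$, and handle the edge cases in which $y$ is the last letter of $\pi'$ or $y$ originally lies in a singleton run of $\pi'$. Once those cases are laid out, the rest of the verification is essentially mechanical.
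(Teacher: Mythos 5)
Your proof is correct and follows essentially the same route as the paper's: the inverse you construct (read $y$ off the position after $n$, set $i=y-1$, delete the subword $n\,y$, and shift down the letters above $y$) is exactly the preimage the paper exhibits in its surjectivity step, and your verification of $\psi^{-1}\circ\psi=\mathrm{id}$ plays the role of its separate injectivity argument. Your explicit check that $x$ remains the rightmost element of $\{1,\dots,i\}$ after the relabelling fills in a detail the paper leaves implicit, but the underlying approach is the same.
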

	\begin{proof}
		We prove that $\psi$ is both injective and surjective.	First let us assume that $(i_1, \pi_1)\neq (i_2, \pi_2)$ for $i_1, i_2\in[n-2]$ and $\pi_1, \pi_2\in \mathcal{RSP}(n-2, k-1)$. Let $\psi(i_1, \pi_1)=\pi_1'$ and $\psi(i_2, \pi_2)=\pi_2'$. Then $\pi_1'$ and $\pi_2'$ are run-sorted permutations in $\mathcal{RSP}^{(2)}(n,k)$ by the previous lemma. We consider two cases. If $i_1\neq i_2$, then in one of the two resulting permutations $n$ is followed by $i_1+1$ while in the other $n$ is followed by $i_2+1$. If $i_1=i_2$ and $\pi_1\neq \pi_2$, then the two run-sorted permutations $\pi_1$ and $\pi_2$ have at least two entries in which they differ. Thus inserting $n~i_1{+}1=n~i_2{+}1$ after the rightmost element of the set $\{1, 2, \ldots, i_1=i_2\}$, produces two different permutations $\pi_1'$ and $\pi_2'$.	Thus, in both cases, $\pi_1'=\psi(i_1, \pi_1)\neq\psi(i_2, \pi_2)=\pi_2'$, hence $\psi$ is injective. Next, consider any $\pi'\in \mathcal{RSP}^{(2)}(n, k)$, then $n$ does not appear in the last position. Let $j>1$ be the integer following $n$ in $\pi'$. We exhibit a pair $(i, \pi)\in [n-2]\times \mathcal{RSP}(n-2, k-1)$ such that $\psi(i, \pi)=\pi'$. Define $\pi$ to be the run-sorted permutation obtained from $\pi'$ by deleting the subword $n~j$ and by decreasing by 1 every integer greater than or equal to $j+1$ in the resulting word. Note that if $n$ follows the integer $i$ in $\pi'$, then $i<j$ and hence deleting the subword $n~ j$ from $\pi'$ reduces the number of runs by $1$ and the size of the partition by 2, whence $\pi\in \mathcal{RSP}(n-2, k-1)$ and $\psi(j+1, \pi)=\pi'$. Therefore, $\psi$ is a bijection. \end{proof}
	
	We are now ready to present the proof of Theorem \ref{rrNaRa}.
	\begin{proof}
		The left-hand side counts the number of run-sorted permutations in $\mathcal{RSP}(n, k)$.	The first term of the right-hand side counts the number of elements in $\mathcal{RSP}^{(1)}(n, k)$. Since $\phi$ is a bijection, we have $r_{n, k}^{(1)}=kr_{n-1, k}$. We show that the second term of the right-hand side counts the number of elements in $\mathcal{RSP}^{(2)}(n, k)$. By Proposition \ref{bijectionpsi} the sets $[n-2]\times \mathcal{RSP}(n-2, k-1)$ and $\mathcal{RSP}^{(2)}(n, k)$ have the same cardinality, that is, $(n-2)r_{n-2, k-1}=r_{n, k}^{(2)}$.
		Thus by combining the two parts we obtain $r_{n, k}=r_{n, k}^{(1)}+r_{n, k}^{(2)}$, and hence the recurrence relation in (\ref{eq1}).
	\end{proof}
	
	We also provide a bijective proof of Theorem \ref{rrNaRa} in terms of canonical forms. Let $T_{n,k}=\{f\in T_n : \lrmax(f)=k\}$, so $|T_{n,k}|=r_{n,k}$. Recall that $T_{n,k}$ is the set of the canonical forms of merging-free partitions over $[n]$ having $k$ blocks. 
	\begin{proof} 
		Firstly, if $f=f_1f_2\cdots f_{n-1}\in T_{n-1,k}$, then by concatenating any integer $i\in [k]$ at the end of $f$ we obtain a $f'\in T_{n,k}$. This is because, $f'$ satisfies the condition of Proposition \ref{CharSubFlat} if and only if $f$ does. This construction obviously produces $kr_{n-1,k}$ functions of $T_{n,k}$ having the property that by erasing the last value $f_n$ we obtain a function in $T_{n-1,k}$. 
		
		Secondly, if $f=f_1f_2\cdots f_{n-2}\in T_{n-2, k-1}$, let $i\in[n-2]$, and let $m=\max_{1\leq j\leq i}\{f_j \}$, then we construct $f'=f'_1f'_2\cdots f'_n\in T_{n,k}$  associated with $(i, f)$ as follows: increase by $1$ all $f_j$s such that $f_j\geq m, j>i$, insert $m+1$ at the position $i+1$, and append $m$ at the end. The functions obtained with the second construction are all different from those obtained using the former one. Indeed, by erasing the last integer from $f'$ we do not obtain a function in $T_{n-1,k}$. The reason is that the value $m+1$ in the position $i+1$ is a left-to-right maximum letter because of the choice of $m$. Now, by construction, the only occurrence of $m$ in $f'$ is at position $n$, by erasing this value the left-to-right maximum letter $m+1$ in the position $i+1$ is left without an occurrence of $m$ on its right. Therefore, $f_1'f_2'\cdots f_{n-1}'$ does not satisfy the property characterizing canonical forms of merging-free partitions. So, this contributes $(n-2)r_{n-2, k-1}$ to the number $r_{n,k}$ as there are $n-2$ possibilities for $i$ and the number of image values of $f'$ increases by $1$.
	\end{proof}
	\begin{example}
	Take $f=12132\in T_{5,3}$, and let $i=3$. We construct $(i,f)\mapsto f'$ as follows: we have $m=\max_{1\leq j\leq 3}\{f_j \}=\max_{1\leq j\leq 3}\{1, 2, 1 \}=2$, and $f'_1=f_1=1, f'_2=f_2=2, f'_3=f_3=1, f'_4=m+1=3, f'_5=f_4+1=4, f'_6=f_5+1=3, f'_7=m=2$. Thus, $f'=1213432\in T_{7,4}$.
\end{example}
\section{Right-to-left minima in  run-sorted permutations}
\subsection{The distribution of right-to-left minima over the set of run-sorted permutations}
	\label{secrlmin}	
	The following proposition gives us the relation between the statistics of right-to-left minima of run-sorted permutations and the weak left-to-right maxima of the canonical forms of the corresponding merging-free partitions.
	\begin{proposition}
		\label{rlmin-lrwmax}
		The set of right-to-left minima of a run-sorted permutation over $[n]$ and the set of weak left-to-right maxima of the canonical form of the corresponding merging-free partition are the same.
	\end{proposition}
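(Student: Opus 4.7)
The plan is to show both sets coincide by giving a single common characterization of their elements in terms of the integer $i \in [n]$. Fix a merging-free partition $P = B_1/\cdots/B_k$ with canonical form $f = f_1\cdots f_n$ and let $\pi = \Flatten(P)$.

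The first step is a comparison lemma: for any two integers $i, j \in [n]$ with $j < i$, the integer $j$ appears to the left of $i$ in $\pi$ if and only if $f_j \leq f_i$. This is immediate from the construction of the flattening: blocks are concatenated in the order $B_1, \ldots, B_k$, so if $f_j < f_i$ then $j \in B_{f_j}$ is written before any element of $B_{f_i}$; while if $f_j = f_i$, both belong to the same block whose elements are listed in increasing order, so the smaller integer $j$ precedes $i$; and if $f_j > f_i$ the reverse happens.

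The second step translates both statistics into the same condition. The integer $i$ is a right-to-left minimum of $\pi$ if and only if every integer smaller than $i$ lies to its left in $\pi$, which by the comparison lemma is equivalent to $f_j \leq f_i$ for every $j < i$, i.e.\ to $i$ being a weak left-to-right maximum of $f$. This yields both inclusions $\Rlmin(\pi) \subseteq \LwMp(f)$ and $\LwMp(f) \subseteq \Rlmin(\pi)$ simultaneously.

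The only subtlety, and the thing to be careful about, is notational: $\Rlmin(\pi)$ is defined as the set of \emph{values} $\pi_i$ that are right-to-left minima, while $\LwMp(f)$ is defined as the set of \emph{positions} in $f$ at which a weak left-to-right maximum occurs; but both are subsets of $[n]$ and the argument above correctly identifies them through the common integer $i$. Beyond this bookkeeping I do not foresee any genuine obstacle, and the merging-free hypothesis on $P$ is not even used in the argument (it only matters insofar as it makes $P \mapsto \Flatten(P)$ a bijection).
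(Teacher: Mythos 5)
Your proof is correct and follows essentially the same route as the paper's: both rest on the observation that for $j<i$ the relative position of $j$ and $i$ in $\Flatten(P)$ is governed by comparing $f_j$ with $f_i$ (block indices), the paper proving the two inclusions separately where you package the same fact as a single biconditional via your comparison lemma. Your closing remarks --- the value/position bookkeeping and the observation that the merging-free hypothesis is only needed so that ``the corresponding merging-free partition'' is well-defined --- are both accurate.
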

	\begin{proof}
		Let $\Flatten(P)=\pi=\pi(1)\cdots\pi(n)\in \mathcal{RSP}(n)$, where $P$ is a merging-free partition over $[n]$. Let $f=f_1\cdots f_n$ be the canonical form of $P$ and let $\{i_1, \ldots, i_r\}$ be the set of the positions of the right-to-left minima of $\pi$, then by definition of right-to-left minima $\Rlmin(\pi)=\{1=\pi(i_1)<\pi(i_2)<\cdots<\pi(i_r)=\pi(n)\}$. Furthermore, if $1\leq j_1<j_2\leq r$ and we let $B_{q_1}$ be the block of $P$ containing $\pi(i_{j_1})$ and $B_{q_2}$ the block of $P$ containing $\pi(i_{j_2})$, then $q_1\leq q_2$ and by the definition of canonical form we have $f_{\pi(i_1)}\leq f_{\pi(i_2)}\leq\cdots\leq f_{\pi(i_r)}$. Assume that $\pi(j)\notin \Rlmin(\pi)$, then there exists some integer $s$ such that $\pi(j)>\pi(s), s>j$. Hence $f_{\pi(s)}>f_{\pi(j)}$ and $\pi(j)\notin \LwMp(f)$. Thus, $\{\pi(i_1), \ldots, \pi(i_k)\}\subseteq \LwMp(f)$.
		
		Conversely, if $\LwMp(f)=\{i_1, \ldots, i_s\}$, then for each $i_j$ we have $f_{i_j}\geq f_t, t<i_j$, that is, all integers $t<i_j$ belong either to $f_{i_j}$-th block or to a preceding block of $P$, therefore, in $\pi$ there is no integer smaller than $i_j$ on the right of $i_j$. Hence $i_j\in \Rlmin(\pi)$. Therefore, $\Rlmin(\pi)=\LwMp(f)$.
	\end{proof}
	\begin{example}
		If $P=149/238/57/6$, then its canonical form is $f=122134321$, and $\Flatten(P)=\pi=149238576$. Thus, we have $\Rlmin(\pi)=\{1, 2, 3, 5, 6\}=\LwMp(f)$.
	\end{example}
	Let $h_{n,r}$ denote the number of run-sorted permutations over $[n]$ having $r$ right-to-left minima.
	\begin{proposition}
		For all positive integers $n$ and $r$ with $2\leq r\leq n$ we have
		\begin{equation}\label{Recflatrlm} h_{n,r}=h_{n-1,r-1}+(r-1)h_{n-1, r}, ~ h_{1,1}=1.\end{equation}
	\end{proposition}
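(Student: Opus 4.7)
My plan is to identify $h_{n,r}$ with the Stirling number $S(n-1,r-1)$ and then invoke the classical Stirling recurrence. The identification uses the material developed in Section \ref{secflatrun} together with the proposition immediately preceding this one.

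First, by Proposition \ref{rlmin-lrwmax}, the number $h_{n,r}$ equals the number of canonical forms $g\in T_n$ with $\lwmp(g)=r$. I would then apply the inverse of the bijection $\alpha:\RGF(n-1)\to T_n$ from Section \ref{secflatrun}. Writing $g=\alpha(f)=1\cdot f'$ with $f\in\RGF(n-1)$, the prepended leading $1$ is automatically the weak left-to-right maximum at position $1$ of $g$, while every later position of $g$ inherits its weak left-to-right maximum status from the corresponding position of $f'$ (because the prepended $1$ cannot exceed any later value). Hence $\lwmp(\alpha(f))=1+\lwmp(f')$, which by Corollary \ref{corltmax} equals $1+\lrmax(f)$. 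Therefore $h_{n,r}=|\{f\in\RGF(n-1):\lrmax(f)=r-1\}|$, and since the left-to-right maxima of an $\RGF$ correspond bijectively to the block minima of the underlying set partition (each block contributes exactly one, namely the first occurrence of its label), this gives $h_{n,r}=S(n-1,r-1)$.

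The recurrence then follows at once from the Stirling recurrence $S(m,k)=S(m-1,k-1)+k\,S(m-1,k)$ with $m=n-1$ and $k=r-1$:
\[h_{n,r}=S(n-1,r-1)=S(n-2,r-2)+(r-1)\,S(n-2,r-1)=h_{n-1,r-1}+(r-1)\,h_{n-1,r},\]
while the initial condition $h_{1,1}=1$ matches $S(0,0)=1$. The only subtle point in this plan is the ``$+1$'' shift in the identity $\lwmp(\alpha(f))=1+\lrmax(f)$, but this reduces immediately to Corollary \ref{corltmax} and the evident observation that prepending $1$ to any word creates exactly one new weak left-to-right maximum at the front; everything else is routine. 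As an alternative, one could attempt a direct combinatorial argument based on inserting or removing $n$ from $\pi\in\mathcal{RSP}(n)$, but the main obstacle there is that deleting $n$ from a non-terminal position can leave a singleton non-last run, so the resulting word is not automatically in $\mathcal{RSP}(n-1)$; the Stirling-number route above avoids this difficulty entirely.
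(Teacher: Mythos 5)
Your proof is correct, but it takes a genuinely different route from the paper's. The paper proves the recurrence directly on run-sorted permutations: every $\pi'\in\mathcal{RSP}(n)$ with $r$ right-to-left minima arises uniquely from some $\pi\in\mathcal{RSP}(n-1)$ by inserting $n$ either at the very end (raising $\rlmin$ by one, giving the term $h_{n-1,r-1}$) or immediately before one of the $r-1$ right-to-left minima of $\pi$ different from $1$ (preserving $\rlmin$, giving $(r-1)h_{n-1,r}$); only afterwards does it observe, as a remark, that the recurrence forces $h_{n,r}=S(n-1,r-1)$. You invert that logical order: you first establish $h_{n,r}=S(n-1,r-1)$ by composing Proposition \ref{rlmin-lrwmax} with the bijection $\alpha$ and Corollary \ref{corltmax} from Section \ref{secflatrun} (your bookkeeping of the prepended $1$, namely $\lwmp(1\cdot f')=1+\lwmp(f')$, is sound since all letters of $f'$ are at least $1$), and then read off the recurrence from the classical Stirling recurrence. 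Your route is shorter given the machinery already built, and it makes the Stirling identification the primary fact rather than a corollary; the paper's route is self-contained, yields an explicit insertion scheme on the permutations themselves, and does not depend on the correctness of $\alpha$. One small correction to your closing aside: the direct deletion map is not actually obstructed in the way you fear, because $n$ is always the last letter of its run, and if its run becomes a singleton $\{x\}$ after deletion then $x$, being that run's minimum, is smaller than the first letter of the next run and the two runs simply merge, so the result is always in $\mathcal{RSP}(n-1)$; this is why the paper's elementary argument goes through.
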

	\begin{proof}
		A run-sorted permutation $\pi'$ over $[n]$ can be obtained from a  run-sorted permutation $\pi$ over $[n-1]$ either by appending $n$ at its end, or by inserting $n$ before any of its right-to-left minima that is different from $1$. In the former case, the number of right-to-left minima increases by $1$, hence this contributes $h_{n-1,r-1}$ to the number $h_{n,r}$. In the later case, if $\Rlmin(\pi)=\{\pi(i_1), \pi(i_2), \ldots, \pi(i_r)\}$, then $1=\pi(i_1)<\pi(i_2)<\cdots<\pi(i_r)$. So, inserting $n$ before any $\pi(i_j)$ for $j\neq1$ makes $\pi(i_j)$ to be the minimum element of its run in $\pi'$. Thus the permutation $\pi'$ is run-sorted with the same number of right-to-left minima as $\pi$, and this contributes $(r-1)h_{n-1, r}$ as there are $r-1$ right-to-left minima different from $1$.
	\end{proof}
	We also give the interpretation of the bijective proof of the recursion formula in (\ref{Recflatrlm}) for the corresponding set of canonical forms of merging-free partitions using Proposition \ref{rlmin-lrwmax}. We interpret $h_{n,r}$ as the number of canonical forms in $T_n$ having $r$ weak left-to-right maxima, that is, $h_{n,r}=|\{f \in T_n : \lwmp(f) = r\}|$. All the elements of the set $\{f \in T_n :\lwmp(f) = r\}$ are obtained in a unique way
	\begin{enumerate}
		\item either from a $f=f_1f_2\cdots f_{n-1}\in T_{n-1,r-1}$ by concatenating $\max_{1\leq j\leq n-1}\{f_j\}$ at its end;
		\item or from a $f=f_1f_2\cdots f_{n-1}\in T_{n-1,r}$ with weak left-to-right maxima $\{i_1, i_2, \ldots, i_r\}$ as follows.
		For each $j=2, \ldots, r$ :
		\begin{itemize}
			\item[-] if $f_{i_j}$ is a non-strict left-to-right maximum letter of $f$, then increase by $1$ every integer $f_s$ such that $f_s\geq f_{i_j}$ and $s\geq i_j$, and
			\item[-] concatenate $f_{i_{j-1}}$ at the end of the resulting function.
		\end{itemize}
		\end{enumerate}
	Thus, the recurrence relation in (\ref{Recflatrlm}) follows.
	
	Recall that the recurrence relation satisfied by the Stirling numbers of the second kind is $S(n,r)=S(n-1,r-1)+rS(n-1,r)$. It is easy to see that from Corollary \ref{corltmax} of Section \ref{secflatrun} and Proposition \ref{rlmin-lrwmax}, the number of blocks in a set partition over $[n-1]$ is one less than the number of right-to-left minima of the corresponding run-sorted permutation over $[n]$ under the bijection in Proposition \ref{prop0}. So, the values of $h_{n,r}$ given in (\ref{Recflatrlm}) are the shifted values of the Stirling numbers of the second kind, that is, $h_{n,r}=S(n-1,r-1)$, for all $n\geq r\geq 1$. 

\subsection{The joint distribution of \texorpdfstring{$\run$}{r} and \texorpdfstring{$\rlmin$}{rl} over the set of run-sorted permutations}
	The statistics $\run$ and $\rlmin$ of a run-sorted permutation are obviously related. In particular, each minimum element of a run is always a right-to-left minimum, so $\run(\pi)\leq\rlmin(\pi), \forall\pi\in\mathcal{RSP}(n)$. We are interested in the joint distribution of these statistics.
	Let $a_{n,k,r}$ denote the number of run-sorted permutations over $[n]$ having $k$ runs and $r$ right-to-left minima. If $n=0$, the only nonzero term is $a_{0,0,0}=1$, if $n\geq 1$, then $a_{n,k,r}=0$, where $k>\lceil \frac{n}{2}\rceil, r>n, r<k, k<, r<1$, or $r>n-k+1$.
	\begin{proposition}
		For all integers $n, k, r$ such that $1\leq k, r\leq n$ the numbers $a_{n,k,r}$ of run-sorted permutations over $[n+2]$ having $k$ runs and $r$ right-to-left minima satisfy
		\begin{equation*}
			a_{n+2,k,r}=a_{n+1,k,r-1}+\sum_{i=1}^n{n \choose i}a_{n+1-i,k-1,r-1}.
		\end{equation*}
	\end{proposition}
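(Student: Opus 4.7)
The plan is to partition $\mathcal{RSP}(n+2,k,r)$ according to whether the maximum entry $n+2$ lies at the last position or not, and to match each part with one term on the right-hand side, refining the approach of Theorem~\ref{rrNaRa} so as to simultaneously track $\rlmin$.

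The case where $n+2$ is at the end is straightforward. Since $n+2$ is the largest entry, every preceding entry is smaller, so $n+2$ always extends the preceding run; in particular it can never form a singleton last run. Erasing $n+2$ from such a $\pi'$ therefore yields a run-sorted permutation in $\mathcal{RSP}(n+1)$ still having $k$ runs and exactly $r-1$ right-to-left minima (the trivial rlmin $n+2$ at the end is lost, and no other rlmin changes). Appending $n+2$ inverts the map, so this case contributes exactly $a_{n+1,k,r-1}$.

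For the case where $n+2$ is not at the end, let $R_j$ denote the run containing $n+2$, let $q$ be the position of $n+2$, and set $m:=\pi'_{q+1}=\min(R_{j+1})\in\{2,\ldots,n+1\}$. I would define a \emph{chunk} $C$ as the contiguous block in $\pi'$ whose entries are the elements of $R_j$ strictly greater than $m$ (listed in increasing order and ending with $n+2$), followed by $m$; writing $i+1:=|C|$, one has $i\geq 1$ and $S:=C\setminus\{n+2\}\in\binom{\{2,\ldots,n+1\}}{i}$. Deleting $C$ from $\pi'$ and relabeling the remaining $n+1-i$ entries order-preservingly by $[n+1-i]$ yields the associated $\pi$. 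The inverse takes $(S,\pi)$ with $m=\min S$, relabels $\pi$ by the alphabet $\{1,\ldots,n+2\}\setminus(S\cup\{n+2\})$ to produce $\tilde\pi$, locates the rightmost position $p$ with $\tilde\pi_p<m$ (which always exists, since $\tilde\pi_1=1<m$), and splices the word formed by $S\setminus\{m\}$ in increasing order, then $n+2$, then $m$, into $\tilde\pi$ right after position $p$.

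The main obstacle is the structural verification. One must check that the chain of inequalities $\tilde\pi_p<m<\min(S\setminus\{m\})$ and $\tilde\pi_{p+1}>m$ forces precisely one run of $\tilde\pi$ to split into two runs of the resulting $\pi'$, so the number of runs jumps from $k-1$ to $k$; that the new run minimum $m$ sits strictly between the minima of the two flanking runs, so $\pi'$ remains run-sorted; and that the rlmin set of $\pi'$ equals that of $\tilde\pi$ enlarged by the single element $m$, because each chunk entry other than $m$ is larger than $m$ which lies immediately to its right, while no pre-existing rlmin of $\tilde\pi$ is destroyed. With these claims in hand, summing over $i\geq 1$ and $S\in \binom{\{2,\ldots,n+1\}}{i}$ produces the second term and completes the proof of the recurrence.
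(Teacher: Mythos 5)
Your proof is correct, but it follows a genuinely different decomposition from the paper's. The paper conditions on whether the integers $1$ and $2$ lie in the same run of $\pi\in\mathcal{RSP}(n+2)$: if they do, deleting $1$ and decrementing every letter gives the term $a_{n+1,k,r-1}$; if they do not, the first run is $\{1\}\cup T$ for an $i$-subset $T$ of $\{3,\ldots,n+2\}$ (whose elements are never right-to-left minima because $2$ sits to their right), and what remains is an arbitrary run-sorted permutation of the complementary $n+1-i$ letters, giving ${n \choose i}a_{n+1-i,k-1,r-1}$. You instead condition on the position of the maximum $n+2$, deleting it when it is final and otherwise excising a variable-length chunk built from the tail of its run together with the minimum $m$ of the following run. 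I checked the structural claims you flag and they all hold: every letter to the right of $m$ in $\pi'$ exceeds $m$ (each later run minimum exceeds $m$, hence so does every later letter), so the insertion point in the inverse really is the rightmost position with value below $m$; exactly one run splits or merges; run-sortedness is preserved because the minima of the flanking runs straddle $m$; and the set of right-to-left minima changes by exactly the element $m$. (One point worth recording explicitly: the run containing $n+2$ always has a letter below $m$, namely its own minimum, so the excision never destroys that run entirely.) What the paper's route buys is brevity --- peeling off the first run makes all three statistics trivially traceable --- whereas yours has the merit of uniformly refining the largest-element philosophy of Theorem \ref{rrNaRa}, at the cost of the more delicate run-merging verification you rightly identify as the main obstacle.
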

\begin{proof} 
	Let $\pi\in\mathcal{RSP}(n+2)$. Let us suppose that the integers $1$ and $2$ are in the same run of $\pi$. Let $\pi'$ be the permutation obtained from $\pi$ by deleting $1$ and then decreasing each of the remaining integers by $1$, then $\pi'\in \mathcal{RSP}(n+1)$ and $\run(\pi)=\run(\pi')$ and $\rlmin(\pi)=\rlmin(\pi')+1$. This implies that $$|\{\pi\in\mathcal{RSP}(n+2):~\run(\pi)=k, \rlmin(\pi)=r, 1 \text{ and } 2 \text{ are in the same run}\}|=a_{n+1,k,r-1}.$$  Let us suppose now that $1$ and $2$ are in different runs of $\pi$ and that the first run (containing $1$) has length $i+1, i\geq1$, then we can choose $i$ elements from the set $\{3, 4, \ldots, n+2\}$ to include in the first run. There are $\displaystyle {n \choose i}$ ways to do so. The remaining part of $\pi$ is a run-sorted permutation over $[n-i+1]$ and there are $a_{n+1-i}$ of them. In this case, the number of runs and the number of right-to-left minima of $\pi$ each increase by $1$. This completes the proof.
\end{proof}
	\begin{theorem}
		We have \begin{equation}\label{rrrrlmin} a_{n,k,r}=a_{n-1,k,r-1}+(k-1)a_{n-1,k,r}+(n-2)a_{n-2,k-1,r-1}, n\geq2, k,r\geq 1\end{equation} with the initial conditions $a_{0,0,0}=1, a_{1,1,1}=1$.
	\end{theorem}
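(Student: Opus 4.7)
The plan is to refine the bijective proof of Theorem \ref{rrNaRa} by tracking the right-to-left minimum statistic under the two maps $\phi$ and $\psi$ constructed there. Denote by $\mathcal{RSP}(n,k,r)$ the set of run-sorted permutations over $[n]$ with $k$ runs and $r$ right-to-left minima, and write $\mathcal{RSP}^{(j)}(n,k,r)=\mathcal{RSP}(n,k,r)\cap\mathcal{RSP}^{(j)}(n,k)$ for $j\in\{1,2\}$. The first two terms of the claimed recurrence will be extracted from $\phi$ (by separating whether $n$ is inserted into the last run or into one of the other $k-1$ runs), and the third term will be extracted from $\psi$; the identity $\mathcal{RSP}(n,k,r)=\mathcal{RSP}^{(1)}(n,k,r)\sqcup\mathcal{RSP}^{(2)}(n,k,r)$ will then produce the formula.

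For $\phi$: given $\sigma\in\mathcal{RSP}(n-1,k)$ and $1\le i\le k$, form $\sigma'=\phi(i,\sigma)$ by appending $n$ at the end of the $i$-th run. If $i=k$, then $n$ is the new last entry, every right-to-left minimum of $\sigma$ remains one in $\sigma'$ (the only new entry to its right is $n$, which is larger), and $n$ is itself a new right-to-left minimum, so $\rlmin(\sigma')=\rlmin(\sigma)+1$. If $i<k$, then $n$ is immediately followed by the minimum of run $i+1$, which is strictly smaller, so $n$ is not a right-to-left minimum of $\sigma'$; for every other entry $x$ of $\sigma$ the only value added to its right is $n$, which is larger than every other entry, so the right-to-left minimum status of $x$ is unchanged. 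Summing these contributions gives $|\mathcal{RSP}^{(1)}(n,k,r)|=a_{n-1,k,r-1}+(k-1)a_{n-1,k,r}$.

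For $\psi$: I will show that $\rlmin(\psi(i,\pi))=\rlmin(\pi)+1$ for every $(i,\pi)\in[n-2]\times\mathcal{RSP}(n-2,k-1)$. Let $p$ be the position of the rightmost entry of $\pi$ in $\{1,\ldots,i\}$, so every entry of $\pi$ at positions $>p$ exceeds $i$. Let $\tau:[n-2]\to[n-1]\setminus\{i+1\}$ be the strictly increasing injection defined by $\tau(x)=x$ for $x\le i$ and $\tau(x)=x+1$ for $x>i$. Then $\pi'=\psi(i,\pi)$ is obtained by replacing each $\pi_q$ with $\tau(\pi_q)$ and inserting the factor $n\,(i+1)$ immediately after position $p$. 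The key claim is
\[
\Rlmin(\pi')=\tau(\Rlmin(\pi))\cup\{i+1\},
\]
which, together with $i+1\notin\tau([n-2])$ and the injectivity of $\tau$, yields $\rlmin(\pi')=\rlmin(\pi)+1$. To verify the claim: the inserted $i+1$ is a right-to-left minimum of $\pi'$ because every entry following it is $\ge i+2$; the inserted $n$ is not, since it is followed by $i+1<n$; and for any other entry $x=\pi_q$, the strict monotonicity of $\tau$ transports the inequalities $x<\pi_{q'}$ $(q'>q)$ to $\tau(x)<\tau(\pi_{q'})$, while the added values $n$ and $i+1$ cause no obstruction once one observes that any right-to-left minimum of $\pi$ located at a position $\le p$ is automatically $\le i$ (otherwise it would exceed $\pi_p\le i$). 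Consequently $\psi$ restricts to a bijection $[n-2]\times\mathcal{RSP}(n-2,k-1,r-1)\to\mathcal{RSP}^{(2)}(n,k,r)$, whence $|\mathcal{RSP}^{(2)}(n,k,r)|=(n-2)a_{n-2,k-1,r-1}$.

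Adding the two cardinalities yields the stated recurrence. The main technical obstacle is the position-by-position verification of the rlmin transfer under $\psi$; the crucial simplifying observation is that any right-to-left minimum of $\pi$ at a position $\le p$ is forced to be $\le i$, which is exactly what keeps it fixed by $\tau$ and prevents the newly inserted value $i+1$ from invalidating the rlmin condition.
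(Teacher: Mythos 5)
Your proposal is correct and follows essentially the same route as the paper: both decompose $\mathcal{RSP}(n,k,r)$ via the maps $\phi$ and $\psi$ from the proof of Theorem \ref{rrNaRa}, obtain the first two terms by distinguishing whether $n$ is appended to the last run or to one of the other $k-1$ runs, and obtain the third term by showing $\psi$ increases $\rlmin$ by exactly one. Your verification of the $\rlmin$ transfer under $\psi$ is more detailed than the paper's (which simply notes that the new run beginning at $i+1$ contributes one more right-to-left minimum), but the argument is the same.
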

	\begin{proof}
		The proof is based on the technique used in the proof of Theorem \ref{rrNaRa}. Let $\pi'$ be a run-sorted permutation over $[n]$ obtained from $\pi\in \mathcal{RSP}(n-1)$ by inserting $n$ at the end of any of its runs. This operation preserves the number of runs. It also preserves the number of right-to-left minima except when $n$ is inserted at the end of the last run of $\pi$, in which case the number of right-to-left minima increases by $1$. So, we get the first two terms of the right-hand side of the recurrence relation. Again, if $\pi'\in \mathcal{RSP}(n)$ is obtained from $\pi\in \mathcal{RSP}(n-2)$ by the operation defined in Lemma \ref{lemmacase2}, that is, $\pi'=\psi(i,\pi)$, where $i\in[n-2]$, then the number of runs and the number of right-to-left minima each increases by $1$. We showed already that this is true for the number of runs, let us show it for the number of right-to-left minima. The operation increases by $1$ each integer greater than $i$ in $\pi$ and inserts the subword $n~i{+}1$ immediately after the rightmost position of the integers of the set $\{1, 2, \ldots, i\}$, then the newly created run beginning at $i+1$ contributes one more right-to-left minimum since the minima of the runs form an increasing subsequence. Thus, we have the last term of the right-hand side of the recurrence.
	\end{proof}
	\begin{theorem}
		The exponential generating function \begin{displaymath} A(x,y,z)=
		\sum_{n,k,r\geq0}a_{n,k,r}\frac{x^n}{n!}y^kz^r\end{displaymath} satisfies the differential equation
		\begin{equation}
			\frac{\partial A}{\partial x}=yze^{xz+yz(-x-1)+ye^x}
			\label{gfrunrlmin}
		\end{equation}
		with the initial condition $\frac{\partial A}{\partial x}|_{x=0}=yz$.
		\label{thmgfrunrlmin}
	\end{theorem}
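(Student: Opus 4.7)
The plan is to derive, from the recurrence (\ref{rrrrlmin}), a first-order linear PDE satisfied by $\widetilde{A} := \partial A/\partial x$, and then verify by direct differentiation that the proposed closed form solves it with the correct value at $x = 0$.

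I would multiply both sides of (\ref{rrrrlmin}) by $\frac{x^{n-1}}{(n-1)!}\, y^k z^r$ and sum over $n \geq 2$, $k \geq 1$, $r \geq 1$. The left-hand side becomes $\partial_x A - yz$, the $-yz$ coming from the omitted $n = 1$ contribution ($a_{1,1,1}=1$). After standard index shifts, the first two terms on the right translate cleanly into $z(A - 1)$ and $y\,\partial_y A - A + 1$. The third term, which carries the awkward factor $(n-2)$, would be handled via the identity $\tfrac{n-2}{(n-1)!} = \tfrac{1}{(n-2)!} - \tfrac{1}{(n-1)!}$; after reindexing, it contributes $xyz\,A - yz \int_0^x A(t, y, z)\,dt$. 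The resulting integro-differential equation contains an integral, which I would remove by differentiating once more with respect to $x$; after cancellations this yields
\begin{equation*}
\frac{\partial \widetilde{A}}{\partial x} \;=\; (z - 1 + xyz)\,\widetilde{A} \;+\; y\,\frac{\partial \widetilde{A}}{\partial y},
\end{equation*}
with the initial condition $\widetilde{A}(0, y, z) = \sum_{k,r} a_{1,k,r}\, y^k z^r = yz$ reading off the $n=1$ row of the table.

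It then remains to substitute the claimed $\widetilde{A} = yz\, e^{\phi}$ with $\phi = xz + yz(-x-1) + ye^x$ into this PDE. A routine differentiation gives $\partial_x \phi = z - yz + ye^x$ and $\partial_y \phi = e^x - z(x+1)$, whence $\partial_x \widetilde{A} = yz\,e^\phi (z - yz + ye^x)$ and $y\,\partial_y \widetilde{A} = yz\,e^\phi (1 + y e^x - yz(x+1))$. Adding $(z - 1 + xyz)\widetilde{A}$ to $y\,\partial_y \widetilde{A}$, the $xyz$ and $yz$ cross-terms cancel, and the resulting bracket $[z - yz + ye^x]$ matches $\partial_x\phi$, so the PDE holds.

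The main obstacle is the factor $(n-2)$ in the recurrence, which produces the integral $yz \int_0^x A\,dt$ in the generating-function translation. The key trick is to differentiate once more in $x$: this eliminates the integral and reduces the proof of the theorem to verifying that a single explicit exponential solves a standard linear first-order PDE, a mechanical computation. The secondary care point is to track the boundary terms (the omitted $n=1$ summand and the $a_{0,0,0}=1$ constant) carefully, so that the constant and $m=0$ contributions on the two sides of the reduction match.
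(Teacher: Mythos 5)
Your derivation of the PDE is sound and is essentially the paper's route in streamlined form: the paper multiplies the recurrence by $x^n/n!$, obtains an integro-differential equation with a double integral, and differentiates twice, whereas you multiply by $x^{n-1}/(n-1)!$ and differentiate once; both arrive at the same equation $B_x - yB_y + (1-z-xyz)B = 0$ for $B = \partial A/\partial x$, with $B(0,y,z) = yz$. The paper then solves this PDE by the method of characteristics, while you propose to verify a given closed form. That substitution check is where your proof has a genuine gap: a first-order linear PDE of this type has a one-function family of solutions (one for each choice of data on the non-characteristic surface $x=0$), so showing that $yze^{xz+yz(-x-1)+ye^x}$ satisfies the PDE proves nothing about $A_x$ until you also verify the initial condition. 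You state the initial condition $\widetilde{A}(0,y,z)=yz$ but never check it against the closed form — and it fails: at $x=0$ the exponent is $-yz+y$, so the proposed formula evaluates to $yze^{y(1-z)}$, not $yz$. Equivalently, its coefficient of $x^1$ is $yze^{y(1-z)}(z-yz+y)$ rather than the required $yz^2$ coming from $a_{2,1,2}=1$. So the verification strategy cannot be completed for the formula as stated.

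What your (completed) approach would actually uncover is that the theorem's formula is incorrect as printed: the unique solution of the PDE with data $yz$ at $x=0$ is
\begin{equation*}
\frac{\partial A}{\partial x} \;=\; yz\,e^{xz+yz(-x-1)+yze^{x}},
\end{equation*}
i.e., the last term of the exponent should be $yze^x$, not $ye^x$. One checks directly that this satisfies both the PDE and the initial condition, and that its Taylor coefficients reproduce $a_{2,1,2}=1$ and $a_{3,1,3}=a_{3,2,2}=1$. The source of the discrepancy is an algebra slip in the paper's own characteristics argument: when imposing the initial condition it writes $y = g(\ln y,z)e^{-y+h(\ln y,z)}$ in place of $yz = g(\ln y,z)e^{-yz+h(\ln y,z)}$, leading to $h(t,z)=e^t$ instead of $h(t,z)=ze^t$. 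The corollaries obtained by setting $z=1$ are unaffected, since the two formulas coincide there. To repair your proof: keep your PDE derivation, replace the target formula by the corrected one, perform the same substitution check (which goes through with $\partial_x\phi = z - yz + yze^x$ and $\partial_y\phi = -z(x+1)+ze^x$), verify the value at $x=0$, and add one sentence justifying uniqueness of the formal power series solution given the data at $x=0$ (e.g., the PDE determines the coefficient of $x^{m}$ in $B$ from the coefficients of $x^{j}$, $j<m$).
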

	\begin{proof} 
		From (\ref{rrrrlmin}) we have \begin{align*}
		\sum_{n\geq2,k,r\geq1}a_{n,k,r}\frac{x^n}{n!}y^kz^r&=\sum_{n\geq2,k,r\geq1}a_{n-1,k,r-1}\frac{x^n}{n!}y^kz^r+\sum_{n\geq2,k,r\geq1}(k-1)a_{n-1,k,r}\frac{x^n}{n!}y^kz^r+\notag\\&~~~~\sum_{n\geq2,k,r\geq1}a_{n-2,k-1,r-1}\frac{x^n}{n!}y^kz^r.\end{align*} Using the notation $\frac{\partial A}{\partial y}=A_y$ and expressing the above equation in terms of $A$ we obtain \begin{align}
			 A&= z\left(\int Adx-x\right)+\int yA_ydx-\left(\int Adx-x\right)+xyz\int Adx-\notag\\&~~~~2yz\int \int Adxdx+1+xyz
			\notag\\
			&=\int yA_ydx-(1-z-xyz)\int A dx+x-xz-2yz\int \int A dxdx+1+xyz.
			\label{gfdoubleint}
 		\end{align}
		By differentiating both sides of (\ref{gfdoubleint}) with respect to $x$ we obtain the following:
		\begin{align*}
			A_x=yA_y-(1-z-xyz)A+1-z-yz\int Adx+yz.
		\end{align*}
		Again by differentiating the above equation with respect to $x$ we obtain
		\begin{align}
			A_{xx}=yA_{yx}-(1-z-xyz)A_x.
			\label{secdiff}
		\end{align}
		By letting $A_x=B$ in (\ref{secdiff}) we obtain
		\begin{align}
			B_x-yB_y+(1-z-xyz)B=0.
			\label{firstdiffsubst}
		\end{align}
		Then, the characteristic equation is $\frac{dy}{dx}=\frac{-y}{1}$ or $\ln y+x=k$ with $k$ constant. We make the transformation with $\epsilon=x, \mu=\ln y+x, \zeta=z$, and  $w(\epsilon,\mu,\zeta)=B(x,y,z)$. Using the substitution we find that (\ref{firstdiffsubst}) transforms to
		\begin{align*}
			w_\epsilon+\left(1-\zeta-\epsilon\zeta e^{\mu-\epsilon}\right)w=0.
		\end{align*}
		By the integrating factor method  we have
		\begin{align*}
			\frac{\partial}{\partial\epsilon}\left(e^{\int\left(1-\zeta-\epsilon\zeta e^{\mu-\epsilon}\right)d\epsilon}w\right)=0
		\end{align*}
		and integrating it with respect to $\epsilon$ and simplifying
		\begin{align*}
			w(\epsilon,\mu,\zeta)&=g(\mu,\zeta)e^{\int\left(1-\zeta-\epsilon\zeta e^{\mu-\epsilon}\right)d\epsilon}\\
			&=g(\mu,\zeta)e^{-\epsilon+\zeta\epsilon+\zeta e^{\mu-\epsilon}(-\epsilon-1)+h(\mu,\zeta)},
		\end{align*}
		where $g$ and $h$ are any differentiable functions of two variables. Using the initial condition $B(0,y,z)=yz$ we have $x=0, \epsilon=0, \mu=\ln y, \zeta=z, w(\epsilon=0,\mu=\ln y,\zeta=z)=yz$, and
		\begin{align*}
			y&=g(\ln y,z)e^{-y+h(\ln y,z)}\\
			ye^y&=g(\ln y,z)e^{h(\ln y,z)}.
		\end{align*} 
		Thus, we obtain $g(t,z)=ze^t, h(t,z)=e^t$. Therefore, we back the transformation in terms of $x, y,z $ so that
		\begin{align*}
			B(x,y,z)&=g(\ln y+x,z)e^{-x+zx+yz(-x-1)+h(\ln y+x,z)}\\
			&=yze^{xz+yz(-x-1)+ye^x}.
		\end{align*}
	 	\end{proof}
By specializing $z=1$ in (\ref{gfrunrlmin}) we obtain the result about the exponential generating function counting run sorted permutations by the number of runs \cite{Na-Ra}. Recall that $r_{n,k}$ is the number of run-sorted permutations over $[n]$ having $k$ runs.
	\begin{corollary}
		If $A(x,y)=\sum_{n,k\geq1}r_{n,k}\frac{x^n}{n!}y^k$, then $A$ satisfies \begin{align*} \frac{\partial A}{\partial x}=ye^{x+y(-x-1)+ye^x}\end{align*}
		with the initial condition $\frac{\partial A}{\partial x}|_{x=0}=y$.
	\end{corollary}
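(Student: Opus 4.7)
The plan is to obtain this corollary as a straightforward specialization $z=1$ of Theorem \ref{thmgfrunrlmin}. The essential observation is that summing $a_{n,k,r}$ over all admissible values of $r$ gives $r_{n,k}$: every run-sorted permutation over $[n]$ with $k$ runs has some definite number of right-to-left minima (and, as already noted, this number lies between $k$ and $n-k+1$). Hence
\begin{equation*}
A(x,y,1)=\sum_{n,k,r\geq0}a_{n,k,r}\frac{x^n}{n!}y^k=\sum_{n,k\geq 0}\Bigl(\sum_{r}a_{n,k,r}\Bigr)\frac{x^n}{n!}y^k=\sum_{n,k\geq 1}r_{n,k}\frac{x^n}{n!}y^k=A(x,y),
\end{equation*}
where I use the same symbol $A$ as in the statement; the constant term contributions vanish because $a_{0,0,0}$ corresponds to the empty permutation, which is absent from the sum defining $A(x,y)$ (but only shifts $A$ by the constant $1$, which disappears upon differentiating in $x$).

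First I would make the above identification explicit, noting that differentiation in $x$ commutes with the specialization $z=1$, so $\partial_x A(x,y)=\partial_x A(x,y,z)\big|_{z=1}$. Then I would substitute $z=1$ directly into the right-hand side of (\ref{gfrunrlmin}), which collapses $yze^{xz+yz(-x-1)+ye^x}$ to $ye^{x+y(-x-1)+ye^x}$, yielding the claimed PDE.

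Finally I would verify the initial condition by evaluating at $x=0$: from the PDE, $\partial_x A(x,y)|_{x=0}=ye^{0+y(-1)+y e^0}=ye^{-y+y}=y$, which matches the stated initial condition (alternatively, this follows from specializing $\partial_x A|_{x=0}=yz$ at $z=1$).

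I do not anticipate a genuine obstacle: the only step that requires a moment of care is making sure that termwise specialization at $z=1$ of the three-variable exponential generating function is legitimate, which is immediate here because for each fixed $n$ only finitely many $a_{n,k,r}$ are nonzero (as recorded in the bounds preceding the recurrence for $a_{n,k,r}$), so no convergence issue arises.
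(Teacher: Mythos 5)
Your proposal is correct and follows exactly the paper's route: the paper obtains this corollary simply by setting $z=1$ in equation (\ref{gfrunrlmin}) of Theorem \ref{thmgfrunrlmin}, which is precisely what you do (with the additional, harmless care of justifying that $\sum_r a_{n,k,r}=r_{n,k}$ and checking the initial condition). No gaps.
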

By specializing $y=z=1$ in (\ref{gfrunrlmin}) we obtain the well-known result about the exponential generating function counting the number of run sorted permutations (merging-free partitions) \cite{Bo,St1}.
	\begin{corollary}
	\label{diffgenfun}
	The exponential generating function $A(x)$ of the number of run sorted permutations has the closed differential form
	$$A'(x)=e^{e^x-1}.$$
\end{corollary}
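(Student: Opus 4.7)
The plan is to derive the stated differential equation by direct specialization of the three-variable identity in Theorem \ref{thmgfrunrlmin}. Setting $A(x) := A(x,1,1)$ and noting that $\sum_{n,k,r} a_{n,k,r} \frac{x^n}{n!}$ collapses to the single-variable EGF of run-sorted permutations (since summing $a_{n,k,r}$ over $k$ and $r$ gives $r_n$), I would substitute $y = z = 1$ in identity (\ref{gfrunrlmin}) and simplify the exponent: the expression $xz + yz(-x-1) + ye^x$ becomes $x + (-x-1) + e^x = e^x - 1$, while the prefactor $yz$ becomes $1$. This immediately yields $A'(x) = e^{e^x - 1}$.

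As a sanity check, I would verify consistency with Proposition \ref{prop0}, which states $r_{n+1} = b_n$ for $n \geq 1$. Writing $A(x) = \sum_{n \geq 1} r_n \frac{x^n}{n!}$, the derivative is
\begin{equation*}
A'(x) = \sum_{n \geq 1} r_n \frac{x^{n-1}}{(n-1)!} = \sum_{m \geq 0} r_{m+1} \frac{x^m}{m!} = \sum_{m \geq 0} b_m \frac{x^m}{m!},
\end{equation*}
which is the classical EGF $e^{e^x - 1}$ for the Bell numbers. This provides an independent confirmation of the closed form produced by the substitution.

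There is essentially no obstacle here, since Theorem \ref{thmgfrunrlmin} has already done all the heavy lifting and the corollary amounts to a routine specialization. The only subtlety worth stating explicitly in the proof is that the initial condition $A'(0)|_{y=z=1} = 1$ inherited from $\frac{\partial A}{\partial x}|_{x=0} = yz$ is consistent with $e^{e^0 - 1} = e^0 = 1$, ensuring that no constant of integration ambiguity spoils the closed form. I would therefore keep the proof to a single short paragraph containing the substitution, the simplification of the exponent, and a one-line remark identifying the right-hand side as the classical Bell-number EGF.
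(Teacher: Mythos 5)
Your proposal is correct and matches the paper's argument exactly: the paper also obtains this corollary by specializing $y=z=1$ in the differential equation (\ref{gfrunrlmin}) of Theorem \ref{thmgfrunrlmin}, simplifying the exponent to $e^x-1$. Your additional consistency check via $r_{n+1}=b_n$ and the Bell-number EGF is a pleasant bonus but not needed.
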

\begin{corollary}
	For all positive integer $n\geq1$, the number $a_n$ of run sorted permutations over $[n]$ is given by
	$$
	a_n=\frac{1}{e}\sum_{m\geq 0}\frac{m^{n-1}}{m!}.$$
\end{corollary}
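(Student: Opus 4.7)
The plan is to extract coefficients directly from the closed differential form supplied by Corollary \ref{diffgenfun}. By construction $A(x)=\sum_{n\geq 1}a_n \frac{x^n}{n!}$, so differentiating term by term gives $A'(x)=\sum_{n\geq 1}a_n \frac{x^{n-1}}{(n-1)!}$, and consequently $a_n=(n-1)!\,[x^{n-1}]\,A'(x)$. Thus proving the stated identity reduces to evaluating the coefficient of $x^{n-1}$ in the explicit series $e^{e^x-1}$; this is essentially the classical Dobi\'nski computation.

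First I would rewrite $A'(x)=e^{e^x-1}=e^{-1}e^{e^x}$ and expand the outer exponential in powers of $e^x$, giving
\begin{align*}
A'(x)=e^{-1}\sum_{m\geq 0}\frac{(e^x)^m}{m!}=e^{-1}\sum_{m\geq 0}\frac{e^{mx}}{m!}.
\end{align*}
Next I would expand each inner factor as $e^{mx}=\sum_{j\geq 0}\frac{(mx)^j}{j!}$ and interchange the order of summation to regroup by powers of $x$, obtaining
\begin{align*}
A'(x)=e^{-1}\sum_{j\geq 0}\Bigl(\sum_{m\geq 0}\frac{m^j}{m!}\Bigr)\frac{x^j}{j!}.
\end{align*}
Setting $j=n-1$ and reading off the coefficient of $\frac{x^{n-1}}{(n-1)!}$ yields $a_n=e^{-1}\sum_{m\geq 0}\frac{m^{n-1}}{m!}$, which is exactly the desired closed form.

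The only delicate point is justifying the interchange of the double summation, but this is routine: the corresponding absolutely-valued double series is
\begin{align*}
\sum_{m,j\geq 0}\frac{m^j\,|x|^j}{m!\,j!}=\sum_{m\geq 0}\frac{e^{m|x|}}{m!}=e^{e^{|x|}},
\end{align*}
which is finite for every real $x$, so Fubini's theorem applies and the rearrangement is legitimate. Hence no genuine obstacle arises: the proof is a direct Dobi\'nski-type expansion of the exponential generating function supplied by Corollary \ref{diffgenfun}, and the resulting identity is consistent with $a_n=b_{n-1}$ obtained from Proposition \ref{prop0}.
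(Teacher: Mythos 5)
Your proof is correct and is exactly the derivation the paper intends: the statement is placed immediately after Corollary \ref{diffgenfun}, and the Dobi\'nski-type expansion of $e^{e^x-1}=e^{-1}\sum_{m\geq 0}e^{mx}/m!$ that you carry out (including the routine justification of the interchange of summations) is the standard way to read off $a_n$ from that generating function. The paper omits the proof entirely, so your argument simply supplies the intended computation, consistent with $a_n=b_{n-1}$.
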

\subsection{A bijection with separated partitions}
We now consider set partitions with no two consecutive integers in the same block. Such partitions have been studied, for instance, by Munagi \cite{Mu}, who called them ``separated'' partitions and proved that separated partitions over $[n]$ having $k$ blocks are counted by the shifted Stirling numbers of the second kind (see \seqnum{A008277}), like run sorted permutations over $[n]$ having $k$ right-to-left minima. 

It is then natural to provide a bijection between these two equisized classes of objects. Let $\mathcal{P}_n$ denote the set of all separated partitions over $[n]$. 
Let $P=B_1/B_2/\cdots/B_k\in\mathcal{P}_n$ with $k$ blocks.
Define a map $\theta : \mathcal{P}_n\mapsto\mathcal{RSP}(n)$ given by $\pi=\theta(P)$, where $\pi$ is obtained as follows:
\begin{itemize}
	\item[-] for $i=2, \ldots, k$ :
	 
	 ~~~~~		if $b\in B_i, b\neq \min(B_i)$ and $b-1\in B_j$ such that $j<i$, then
	 
	 ~~~~~~~~~		move $b$ to $B_{i-1}$ and rearrange the elements of $B_{i-1}$ in increasing order;
	 \item[-]  flatten the resulting partition and set it to $\pi$.
\end{itemize}
\begin{example}
	If $P=1358/26/47$, then $1358/26/47\rightarrow13568/2/47\rightarrow13568/27/4~\rightarrow13568274=\pi$.
\end{example}
\begin{theorem}
	The map $\theta$ is a bijection.
\end{theorem}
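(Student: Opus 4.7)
The plan is to prove bijectivity by showing $\theta$ is well-defined and injective, then invoking the cardinality identity $|\mathcal{P}_n|=|\mathcal{RSP}(n)|$ that follows from the enumeration results already established in the paper.

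For well-definedness: the intermediate partition $P'$ obtained after the transfers in the definition of $\theta$ is still a set partition in block representation, since block minima never move and therefore the sequence $\min(B_1)<\min(B_2)<\cdots$ is preserved. Flattening a set partition in block representation yields a run-sorted permutation by definition, so $\theta(P)\in\mathcal{RSP}(n)$.

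For injectivity, I would first establish the key structural fact that $\Rlmin(\theta(P))$ coincides with the set of block minima of $P$ (equivalently of $P'$). The forward inclusion is immediate: each $m_i=\min(B_i)$ appears at the start of the segment of the flattening $\pi=\theta(P)$ coming from the transferred block $C_i$, and every subsequent letter of $\pi$ is strictly larger, being either a later element of $C_i$ (written in increasing order, since $C_i$ lies inside a single run) or an element of $C_j$ for $j>i$ with minimum $m_j>m_i$. Conversely, every non-initial letter $x$ of a segment $C_i$ must satisfy $x>m_{i+1}$: if instead $x<m_{i+1}$, every letter to the right of $x$ (within $C_i$ or in later $C_j$) would exceed $x$, wrongly making $x$ a right-to-left minimum. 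Hence $m_{i+1}<x$ lies to the right of $x$ and $x$ is not a right-to-left minimum, so the rlmin positions of $\pi$ recover the block boundaries of $P'$ exactly, and $P'$ is uniquely determined by $\pi$.

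To recover $P$ from $P'$, decompose each block $C_i$ into maximal intervals of consecutive integers (``chains''). Separatedness of $P$ forces each chain's elements to alternate between $B_i$ and $B_{i+1}$ (the only two blocks an element of $C_i$ could originate from, since a forward transfer shifts block indices by exactly one). The starting element $b_{\min}$ of a chain is pinned down: if $b_{\min}=m_i$ then it stays in $B_i$; otherwise chain maximality forces $b_{\min}-1\in C_{j'}$ with $j'\ne i$, and a direct case analysis using the forward transfer criterion shows $b_{\min}\in B_{i+1}$ iff $j'<i$, and $b_{\min}\in B_i$ iff $j'>i$. The main obstacle I anticipate is checking cross-chain separatedness, i.e., that consecutive integers $b$ and $b+1$ belonging to distinct chains of $P'$ land in distinct blocks of $P$; a brief case analysis on the relative block indices of $b$ and $b+1$ handles all cases, the only potentially collisional configuration (where $b$ is moved to $B_{i+1}$ and $b+1=m_{i+1}$) being ruled out by the observation that $b<m_{i+1}=\min(B_{i+1})$ forbids $b$ from lying in $B_{i+1}$. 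Bijectivity then follows from the cardinality identity: Munagi's result \cite{Mu} gives $|\{P\in\mathcal{P}_n : P\text{ has }k\text{ blocks}\}|=S(n-1,k-1)$, matching $h_{n,k}=S(n-1,k-1)$ established in Section \ref{secrlmin} for the rlmin-distribution over $\mathcal{RSP}(n)$.
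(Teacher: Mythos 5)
Your overall architecture (well-definedness, then injectivity, then bijectivity from the cardinality identity, since Munagi's count of separated partitions by blocks matches $h_{n,r}=S(n-1,r-1)$) is the same as the paper's, and your injectivity argument by explicit reconstruction of $P$ from $\pi$ is a legitimate alternative to the paper's direct comparison of two distinct separated partitions at their minimal differing element; your chain decomposition is essentially the inverse map $\theta^{-1}$ that the paper only sketches after its proof. However, there is a genuine gap at the crux of your structural lemma. Your justification of the claim that every non-initial letter $x$ of a segment $C_i$ satisfies $x>m_{i+1}$ is circular: you rule out $x<m_{i+1}$ on the grounds that it would ``wrongly'' make $x$ a right-to-left minimum, but whether such an $x$ can be a right-to-left minimum is exactly what this direction of the lemma is supposed to decide. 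The claim is true, but proving it requires returning to the transfer rule and separatedness: if $x$ entered $C_i$ from $B_{i+1}$, then $x\neq\min(B_{i+1})$ gives $x>m_{i+1}$; if $x$ stayed in $B_i$ with $x\neq\min(B_i)$, then $x-1$ lies in an original block $B_j$ with $j>i$ (for $j<i$ the rule would have moved $x$, and $j=i$ is excluded by separatedness), whence $x>x-1\geq\min(B_j)\geq m_{i+1}$. This is precisely the case analysis the paper performs, and without it your recovery of the intermediate partition from $\Rlmin(\pi)$ is unsupported.

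A second, smaller omission: the inequality $x>m_{i+1}$ is meaningless for $i=k$, yet a non-initial letter of the last segment $C_k$ would automatically be a right-to-left minimum (everything to its right inside $C_k$ is larger), which would destroy the identification of $\Rlmin(\pi)$ with the block minima. You need to observe that $C_k$ is always a singleton: for any $b\in B_k$ with $b\neq\min(B_k)$, separatedness forces $b-1\in B_j$ with $j<k$, so the procedure transfers $b$ out of $B_k$. With these two repairs your argument goes through; in particular your criterion for the starting element of a chain (it originated in $B_{i+1}$ if and only if the final block index of $b_{\min}-1$ is less than $i$) is correct and can be verified exactly as you outline.
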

\begin{proof}
	We first prove that $\pi\in\mathcal{RSP}(n)$, for every $P\in \mathcal{P}_n$. The procedure never moves $\min(B_i)$ for all $i$. Thus, the minima remain in increasing order and hence $\pi$ is a run-sorted permutation.
	We now show that if $P$ has $k$ blocks, then $\pi$ has $k$ right-to-left minima.  Obviously, the minimum of each block of $P$ becomes a right-to-left minimum of $\pi$. Let $b$ be in the block $B_i$ with $b\neq \min(B_i)$. The integer $b-1$ is in different block, say $B_j$. If $j<i$, then the procedure moves $b$ to the block $B_{i-1}$ leaving $\min(B_i)$ on its right in $\pi$. Therefore, $b$ cannot be a right-to-left minimum of $\pi$. Suppose that $j>i$. Since $b-1\geq \min(B_j)$ we have $b>\min(B_j)$, so the procedure moves neither $b$ nor $\min(B_j)$ which implies that $b$ cannot be a right-to-left minimum of $\pi$. Therefore, $b$ is a right-to-left minimum of $\pi$ if and only if $b=\min(B_i), i=1, \ldots, k$. We next prove that $\theta$ is one-to-one. Suppose that $P\neq P'$, where $P, P'\in \mathcal{P}_n$. If the number of blocks of $P$ and the number of blocks of $P'$ are different, then we are done since $\theta(P)$ and $\theta(P')$ have different number of right-to-left minima. Let $P=B_1/\cdots/B_k$ and $P'=B'_1/\cdots/B'_k$, and assume that there exists an element $b\in B_i$ and $b\in B'_j$ such that $B_i$ is the block of $P$ and $B'_j$ is the block of $P'$ with $i\neq j$. We take the minimal of these elements. Up to exchanging of $P$ and $P'$ we can suppose $i<j$.
	\begin{enumerate}
		\item If $b=\min(B_i)$, then $b$ is the $i$-th right-to-left minimum of $\theta(P)$ and it would not be the case for $\theta(P')$.
		\item Let $b\neq \min(B_i)$ and let $b-1\in B_m$. Note that $b-1\in B'_m$ for the minimality of $b$. Three sub-cases are possible:
		\begin{itemize}
			\item if $m<i<j$, then $\theta$ moves $b$ to the block $B_{i-1}$ of $P$ and moves $b$ to the block $B'_{j-1}$ of $P'$;
			\item if $i<m<j$, then $\theta$ leaves $b$ in the block $B_i$ in $P$ while it moves $b$ to the block $B_{j-1}'$ in $P'$. Note that $j-1\neq i$, because $i\lneq m\lneq j$;
			\item if $i<j<m$, then  $\theta$ leaves $b$ in the block $B_i$ in $P$ and leaves $b$ in the block $B'_j$ in $P'$.
		\end{itemize}
	\end{enumerate}
	In all cases we have $\theta(P)\neq\theta(P')$.
	Therefore, $\theta$ is a bijection.
\end{proof}
We now present the inverse of $\theta$. Let $\pi\in \mathcal{RSP}(n)$ with $k$ right-to-left minima. We construct $P=\theta^{-1}(P')$ as follows:
\begin{itemize}
	\item[-] insert a slash before each right-to-left minimum  of $\pi$ and let $B_1/\cdots/B_k$ be the resulting partition;
	\item[-] for $i=k, \ldots, 1$:
	 
	 ~~~~~	for $b$ in $B_i\setminus \{\min(B_i)\}$ taken in increasing order
	 
	 ~~~~~~~~~		if $b-1\in B_j, j\leq i$, then
	 
	 ~~~~~~~~~~~~~			move $b$ to $B_{i+1}$ and rearrange the elements in each block in increasing order.
\end{itemize}
It can be easily checked that $\theta^{-1}$ constructs a separated partition.
For instance, if $\pi=13625784$, then by inserting slashes before each right-to-left minimum we have the partition $136/2578/4$, and we move $7$ to $B_3$ since $6\in B_1$. So, $P=136/258/47$.
	\section{Non-crossing merging-free partitions}
	\label{secNoncross}
	A non-crossing partition of a set $A=[n]$ is a partition in which no two blocks ``cross'' each other, that is, if $a$ and $b$ belong to one block and $x$ and $y$ to another, then they cannot be arranged in the order $a x b y$. If one draws an arc connecting $a$ and $b$, and another arc connecting $x$ and $y$, then the two arcs cross each other if the order is $a x b y$ but not if it is $a x y b$ or $a b x y$. In the latter two orders the partition $\{ \{a, b \}, \{x, y\} \}$ is non-crossing  \cite{Si}.
	\begin{example}
		In the following figure, the diagrams of $P=1~2~5~7/~3~9~10/~4~6~8$ and of $P'=1~2~3~9~10/~4~6~7~8/~5$, respectively crossing and non-crossing partitions.
		\begin{figure}[H]
			\centering
			\includegraphics[scale=.7]{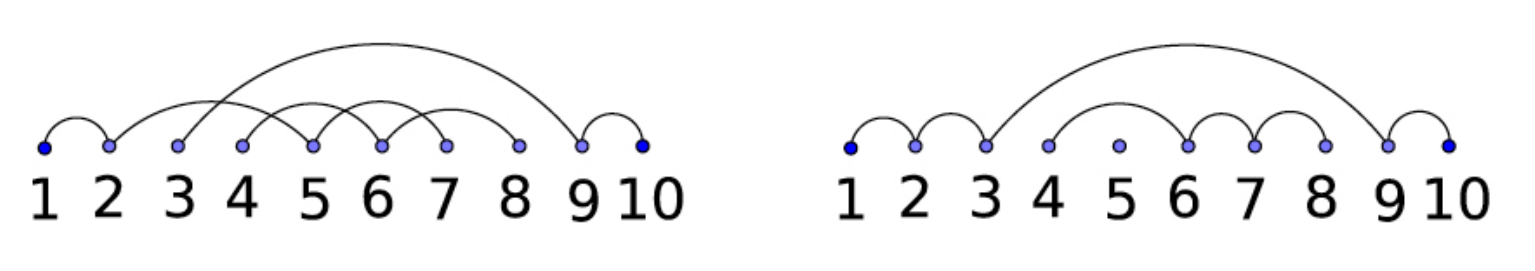}
			\caption{Crossing and non-crossing merging-free partitions}
		\end{figure}
	\end{example}
	We are interested in non-crossing merging-free partitions over $[n]$. We let $\mathcal{M}_n$ denote the set of all non-crossing merging-free partitions over $[n]$, and $\mathcal{M}_{n,t}:=\{P\in M_n : \blocks(P)=t\}$, where $\blocks(P)$ denote the number of blocks of the partition $P$.
	\begin{theorem}
		For all integers $n\geq1$ we have 
		\begin{equation} \label{ncmfp}\sum_{P\in \mathcal{M}_n}q^{\blocks(P)}=\sum_{t=1}^{\frac{n+1}{2}}{n-1 \choose 2(t-1)}q^t.\end{equation}
	\end{theorem}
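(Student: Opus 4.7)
The plan is to establish a structural/recursive lemma for non-crossing merging-free partitions, which then yields an explicit bijection with $2(t-1)$-subsets of $[n-1]$.

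First, I would prove the following structural claim: if $P = B_1/B_2/\cdots/B_t \in \mathcal{M}_{n,t}$ with $t \geq 2$, then $B_1 = \{1, \ldots, a\} \cup \{b, \ldots, n\}$ for integers $1 \leq a$ and $a + 2 \leq b \leq n$, and $B_2, \ldots, B_t$ forms a non-crossing merging-free partition of the interval $\{a+1, \ldots, b-1\}$ with $t-1$ blocks. Writing $B_1 = \{x_1 < x_2 < \cdots < x_s\}$ with $x_1 = 1$, the merging-free condition $\max(B_1) > \min(B_2)$ places $\min(B_2)$ inside some gap $(x_j, x_{j+1})$, and non-crossing of $B_1$ and $B_2$ then forces $B_2$ itself into that gap. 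An induction on $i$, using $\min(B_{i+1}) < \max(B_i) \leq x_{j+1}-1$ together with non-crossing between $B_1$ and $B_{i+1}$, shows all of $B_2, \ldots, B_t$ lie in this single gap. Since $P$ partitions $[n]$, the complement of $B_1$ must equal $\{x_j+1, \ldots, x_{j+1}-1\}$, which forces $x_s = n$ and all other gaps of $B_1$ to be empty, yielding the claimed form with $a = x_j$ and $b = x_{j+1}$.

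Second, I would iterate this structure to encode $P$ by a sequence of nested pairs $(a_1, b_1), (a_2, b_2), \ldots, (a_{t-1}, b_{t-1})$ satisfying $1 \leq a_1 < a_2 < \cdots < a_{t-1}$, $b_{t-1} < \cdots < b_1 \leq n$, and $b_i \geq a_i + 2$ for each $i$. The map $P \mapsto \{a_1, \ldots, a_{t-1}\} \cup \{b_1 - 1, \ldots, b_{t-1} - 1\}$ then produces a $2(t-1)$-subset of $[n-1]$. The chain of inequalities implies the $2(t-1)$ listed values are distinct and lie in $[n-1]$, and conversely any $2(t-1)$-subset $\{c_1 < c_2 < \cdots < c_{2t-2}\}$ of $[n-1]$ decodes via $a_i = c_i$ and $b_i = c_{2t-1-i}+1$ to a valid nested tuple, which reconstructs $P$ uniquely. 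Hence $|\mathcal{M}_{n,t}| = \binom{n-1}{2(t-1)}$, and summing over $t$ gives (\ref{ncmfp}).

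I expect the main obstacle to be the inductive step in the structural lemma: verifying that once $B_2$ is trapped in a single gap of $B_1$, each subsequent block $B_{i+1}$ is likewise forced into that same gap. This is where the two hypotheses interact most subtly; the merging-free condition propagates an upper bound on $\min(B_{i+1})$ along the chain of blocks, while non-crossing with $B_1$ then confines the whole of $B_{i+1}$ to the gap. Once this confinement is established, the recursion and the resulting bijection with $2(t-1)$-subsets of $[n-1]$ follow routinely.
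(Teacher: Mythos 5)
Your proof is correct, and it takes a genuinely different route from the paper's. The paper argues by strong induction on $n$: it splits $\mathcal{M}_n$ according to whether $n$ lies in the same block as $n-1$ (deleting $n$ in the first case, and in the second case showing the first block must be $\{1,\ldots,i,n\}$ and deleting it whole), and then finishes with binomial-coefficient algebra (a hockey-stick summation followed by Pascal's rule). You instead prove the stronger structural statement that every $P\in\mathcal{M}_{n,t}$ with $t\geq 2$ has first block $\{1,\ldots,a\}\cup\{b,\ldots,n\}$ with all remaining blocks nested inside the single internal gap $\{a+1,\ldots,b-1\}$, and iterate this to obtain an explicit bijection between $\mathcal{M}_{n,t}$ and the $2(t-1)$-subsets of $[n-1]$. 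Your confinement argument is sound: merging-freeness gives $\min(B_2)<\max(B_1)$, so $\min(B_2)$ lies in an \emph{internal} gap of $B_1$ rather than beyond $\max(B_1)$; non-crossing then traps all of $B_2$ in that gap, and the chain $\min(B_{i+1})<\max(B_i)$ propagates the confinement to every later block, which in turn forces $\max(B_1)=n$ and empties all other gaps. (One small point to make explicit in a write-up: the inductive step also needs the lower bound $\min(B_{i+1})>x_j$, which comes from the ordering of the blocks by their minima, $\min(B_{i+1})>\min(B_i)$.) What your approach buys is a fully bijective proof of the refined count $|\mathcal{M}_{n,t}|=\binom{n-1}{2(t-1)}$, with no summation identities and a transparent picture of these partitions as nested prefix--suffix blocks; the paper's recursion is quicker to state but only yields the generating polynomial after algebraic manipulation.
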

	\begin{proof}
		We use strong induction on $n$, and provide a recursive construction for the merging-free partitions of $\mathcal{M}_n$. For $n=1$ the assertion is trivially true (initial condition). Assume that $n\geq2$ and the assertion is true for all integers smaller than $n$. We distinguish two cases, depending on if $n$ is in the same block of a merging-free partition $P$ as $n-1$ or not. Suppose that $P$ has $t$ blocks. 
		
		\textbf{Case-1.} If $n$ is in the same block as $n-1$, then we delete $n$ and obtain a non-crossing merging-free partition $P'$ in $\mathcal{M}_{n-1,t}$. The induction hypothesis implies that
		\begin{align}\label{eqsameblock}\sum_{\substack{P\in \mathcal{M}_n\\
		n\text{ and } n-1 \text{ are in}\\\text{the same block of } P}}q^{\blocks(P)}&=\sum_{\substack{P'\in \mathcal{M}_{n-1}}}q^{\blocks(P')}\notag\\
	&=\sum_{t=1}^{\frac{n}{2}}{n-2 \choose 2(t-1)}q^t.\end{align}
		
		\textbf{Case-2.} Suppose that $n$ is not in the same block as $n-1$ in $P$ and, that $n$ is in the same block as a certain $i$, with $i<n-1$. Let $i$ be the maximum of such elements. In this case, as we shall see, all the integers $1, 2, \ldots, i, n$ are in the first block of $P$. Assume for a contradiction that there is $j \in \{1, 2, \ldots, i-1\}$ such that $j$ is not in the same block as $i$. We can choose $j$ such that $j+1$ is in the same block as $i$. If  $j$ is not the maximum element of its block, then the arc relating it to its successor in the partition creates a crossing. 
	\makeatletter
	\def\@captype{figure}
	\makeatother
	\begin{center} 
		\includegraphics[width=0.5\columnwidth]{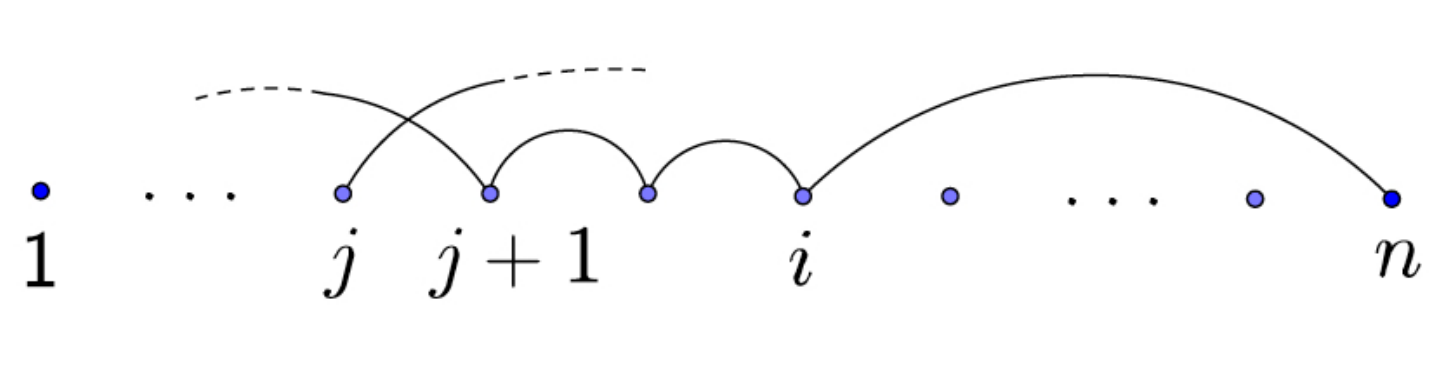} 
		\caption{}\end{center}
		If instead $j$ is the maximum element of its block, then there is an integer $k>i$ such that $k$ is in the same block as an integer $j'<j$ (and hence creates a crossing with the arc $(i, n)$), because otherwise, the block containing $j$ should be merged with one of the blocks containing integers of $[i+1, n-1]$ and hence the partition would not be merging-free.
		\begin{center}
			\makeatletter
			\def\@captype{figure}
			\makeatother
			\includegraphics[width=0.5\columnwidth]{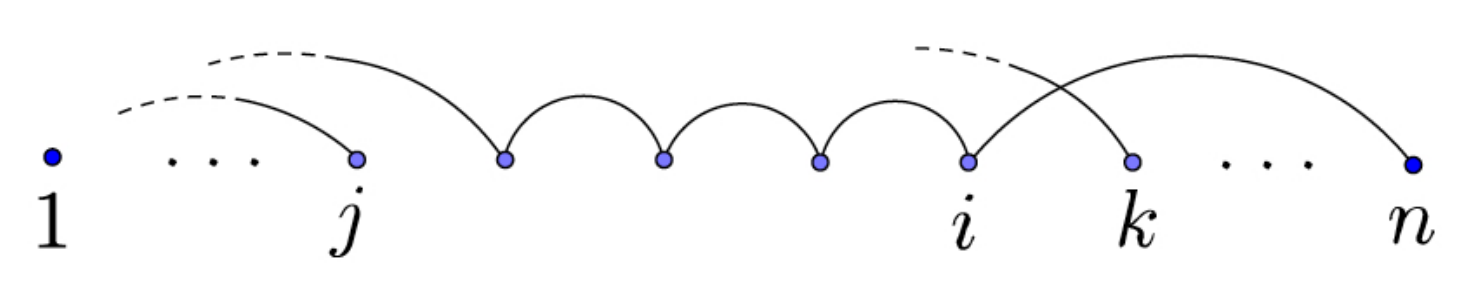}
			\caption{}
		\end{center}
		Thus, the first block is uniquely determined by the integer $i$ and the remaining $n-i-1$ integers must form a non-crossing merging-free partition. So let $P''$ be the partition obtained by deleting the first block of $P$ and then standardizing the resulting partition, that is, subtract $i$ from each of the remaining integers, so we have $P''\in \mathcal{M}_{n-i-1,t-1}$. Thus by the induction hypothesis and taking the sum over all possible $i$ we have
		\begin{align}\label{eqdiffblock}\sum_{\substack{P\in \mathcal{M}_n\\
				n\text{ and } n-1 \text{ are not}\\\text{in the same block of } P}}q^{\blocks(P)}&=\sum_{i=1}^{n-2}q\sum_{\substack{P''\in \mathcal{M}_{n-i-1}}}q^{\blocks(P'')}\notag\\&=\sum_{i=1}^{n-2}q\sum_{t=2}^{\frac{n-i+2}{2}}{n-i-2 \choose 2(t-2)}q^{t-1}.\end{align}
		Therefore, putting (\ref{eqsameblock}) and (\ref{eqdiffblock}) together we have
		\begin{align*}
			\sum_{P\in \mathcal{M}_n}q^{\blocks(P)}
			&=\sum_{t=1}^{\frac{n}{2}}{n-2 \choose 2(t-1)}q^t+\sum_{i=1}^{n-2}q\sum_{t=2}^{\frac{n-i+2}{2}}{n-i-2 \choose 2(t-2)}q^{t-1}\\
			&=\sum_{t=1}^{\frac{n}{2}}{n-2 \choose 2(t-1)}q^t+\sum_{t=2}^{\frac{n+1}{2}}q^t\sum_{i=1}^{n-2t+2}{n-i-2 \choose2t-4}\\
			&=\sum_{t=1}^{\frac{n}{2}}{n-2 \choose 2(t-1)}q^t+\sum_{t=2}^{\frac{n+1}{2}}{n-2 \choose 2t-3}q^t\\
			&=q+\sum_{t=2}^{\frac{n+1}{2}}\left[{n-2 \choose 2t-2}+{n-2 \choose 2t-3}\right]q^t=\sum_{t=1}^{\frac{n+1}{2}}{n-1 \choose 2t-2}q^t.
		\end{align*}
	\end{proof}
	\begin{corollary}
		The number $m_n$ of non-crossing merging-free partitions over $[n]$ is equal to $2^{n-2}$, where $n\geq 2$ and $m_1=m_2=1$.
	\end{corollary}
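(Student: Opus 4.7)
The plan is to deduce the corollary directly from the preceding theorem by specializing $q=1$ in the polynomial identity
$$\sum_{P\in \mathcal{M}_n}q^{\blocks(P)}=\sum_{t=1}^{\frac{n+1}{2}}{n-1 \choose 2(t-1)}q^t.$$
Setting $q=1$ on the left-hand side counts every non-crossing merging-free partition over $[n]$ exactly once, hence yields $m_n$. On the right-hand side, the same specialization gives the sum of the binomial coefficients $\binom{n-1}{0}, \binom{n-1}{2}, \binom{n-1}{4}, \ldots$, i.e., the sum of $\binom{n-1}{j}$ taken over the even indices $j\in\{0,2,\ldots\}\cap[0,n-1]$.

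So the only remaining step is to invoke the classical identity
$$\sum_{\substack{j=0\\ j\text{ even}}}^{n-1}\binom{n-1}{j}=2^{n-2}\qquad (n\geq 2),$$
which is an immediate consequence of adding the two binomial expansions $(1+1)^{n-1}=\sum_{j}\binom{n-1}{j}$ and $(1-1)^{n-1}=\sum_{j}(-1)^{j}\binom{n-1}{j}=0$ and dividing by $2$. Combining this with the specialization above gives $m_n=2^{n-2}$ for $n\geq 2$.

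The base values $m_1=m_2=1$ are handled separately as initial conditions: for $n=1$ the empty-index-range sum on the right of the theorem reduces to $\binom{0}{0}q=q$, giving $m_1=1$; for $n=2$ the right-hand side is $\binom{1}{0}q=q$, consistent with $m_2=1=2^{0}$. No obstacle is expected beyond verifying these small cases align with the formula, since the main combinatorial content has already been absorbed into the preceding theorem. Thus the corollary follows with essentially no further combinatorics.
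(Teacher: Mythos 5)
Your proposal is correct and follows exactly the paper's own argument: specialize $q=1$ in the polynomial identity of the preceding theorem and invoke the classical fact that the even-indexed binomial coefficients $\binom{n-1}{2k}$ sum to $2^{n-2}$. The only difference is that you additionally spell out the derivation of that identity and the small cases $n=1,2$, which the paper leaves implicit.
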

\begin{proof}
	This follows from putting $q=1$ in (\ref{ncmfp}) and using the well-known identity $$\sum_{k\geq0}{n \choose 2k}=2^{n-1}.$$
\end{proof}
	A function $f\in\RGF(n)$ is said to avoid a pattern $212$ if there does not exist some indices $a<b<c$ such that $f_a=f_c>f_b$. Let $f$ be the canonical form of a set partition $P$ over $[n]$, then $P$ is non-crossing if and only if $f$ is $212$-avoiding (see \cite{Li-Fu,Si1}). A function $f=f_1f_2\cdots f_n$ is said to be \textit{weakly uni-modal} if there exists a value $m\leq n$ for which it is weakly increasing for $i\leq m$ and weakly decreasing for $i\geq m$, that is, $f_1\leq f_2\leq\cdots\leq f_m\geq f_{m+1}\geq\cdots\geq f_n$. Thus, we have the following result.
	\begin{proposition}
		Let $f$ be the canonical form of a merging-free partition $P$, then $f$ is $212$-avoiding if and only if it is weakly uni-modal.
	\end{proposition}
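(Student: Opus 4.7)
The approach is to prove the two directions separately.

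For the backward direction, suppose $f$ is weakly uni-modal with $f_1 \leq \cdots \leq f_m \geq \cdots \geq f_n$, and assume for contradiction there are indices $a < b < c$ with $f_a = f_c > f_b$. If $c \leq m$, the weakly increasing part forces $f_a \leq f_b$; if $a \geq m$, the weakly decreasing part forces $f_b \geq f_c$; and if $a < m < c$, then either $b \leq m$ (giving $f_a \leq f_b$) or $b \geq m$ (giving $f_b \geq f_c$). Each case contradicts $f_b < f_a = f_c$.

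For the forward direction, let $f \in T_n$ be $212$-avoiding with largest letter $k$, and for each $v \in [k]$ let $F(v)$ and $L(v)$ denote the first and last positions where $v$ appears in $f$. Three structural facts combine: (a) the $\RGF$ property gives $F(1) < F(2) < \cdots < F(k)$; (b) the characterization of $T_n$ in Proposition \ref{CharSubFlat} gives $L(v) > F(v+1)$ for every $v < k$; and (c) $212$-avoidance gives $f_b \geq v$ whenever $F(v) \leq b \leq L(v)$, since otherwise $(F(v), b, L(v))$ would form a $212$-pattern.

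Combining (b) and (c) forces the strict nesting $[F(1), L(1)] \supsetneq [F(2), L(2)] \supsetneq \cdots \supsetneq [F(k), L(k)]$: indeed if $L(v+1) \geq L(v)$, then $L(v) \in [F(v+1), L(v+1)]$ but $f_{L(v)} = v < v+1$, contradicting (c) applied to value $v+1$. In particular $L(1) = n$, since any position beyond $L(1)$ would hold a value $w \geq 2$, forcing $L(w) > L(1)$ against the nesting. Reading off the admissible value on each of the $2k-1$ sub-intervals thus obtained, (a) and (c) together imply that positions in $[F(v), F(v+1)-1]$ and in $[L(v+1)+1, L(v)]$ (for $v < k$) carry only the value $v$, while positions in $[F(k), L(k)]$ carry only $k$. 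Hence $f$ has the form $1^{a_1} 2^{a_2} \cdots (k-1)^{a_{k-1}} k^{a_k} (k-1)^{b_{k-1}} \cdots 2^{b_2} 1^{b_1}$, which is manifestly weakly uni-modal with its plateau on $[F(k), L(k)]$.

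The main technical step is establishing the strict nesting of the intervals $[F(v), L(v)]$; this is the only point at which the merging-free hypothesis (b) and the $212$-avoidance hypothesis (c) interact nontrivially. Once the nesting is in hand, the weakly uni-modal shape is simply read off the interval decomposition of $[1, n]$.
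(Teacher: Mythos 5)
Your proof is correct, and it takes a genuinely different route from the paper's. The paper argues the forward direction by contradiction through pattern containment: a non-weakly-uni-modal $\RGF$ that avoids $212$ must contain a $312$ or a $213$ occurrence, and each case is converted into a hidden $212$ occurrence using the $\RGF$ property (for $312$) or the merging-free condition together with a minimality choice of the occurrence (for $213$); the backward direction is dismissed as clear. You instead give a direct structural derivation: writing $F(v)$ and $L(v)$ for the first and last positions of each value $v$, you combine the $\RGF$ ordering of the $F(v)$, the merging-free condition $L(v)>F(v+1)$ from Proposition \ref{CharSubFlat}, and the $212$-avoidance bound $f_b\geq v$ on $[F(v),L(v)]$ to force the strict nesting of the intervals $[F(v),L(v)]$, and then read off that $f$ must have the pyramid shape $1^{a_1}2^{a_2}\cdots k^{a_k}(k-1)^{b_{k-1}}\cdots 1^{b_1}$ with all exponents positive. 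Each step checks out (the only tiny omission is the remark that $F(1)=1$, which is immediate from $f_1=1$, so the $2k-1$ intervals really do cover $[1,n]$). Your approach buys strictly more than the statement asks: it exhibits the explicit normal form of the canonical forms of non-crossing merging-free partitions, from which the enumeration $\binom{n-1}{2(k-1)}$ of Section \ref{secNoncross} (compositions of $n$ into $2k-1$ positive parts) falls out immediately, whereas the paper's argument only establishes the equivalence. You also supply the easy case analysis for the backward direction that the paper omits.
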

\begin{proof}
	Consider the forward implication. Since $f$ is a $\RGF$, if it is not weakly uni-modal, although it is $212$-avoiding, then it contains either a pattern $312$ or $213$. If $f$ contains a $312$, then before the $3$ there is a $2$ and hence it contains $212$. This is a contradiction. Suppose that $f$ contains a pattern $213$ in the positions $a<a+1<b$, where $b$ is the smallest such integer. If $f_b$ is a left-to-right maximum letter in $f$, then there exists some integer $c>b$ such that $f_c=f_b-1$ because $P$ is merging-free. Since $f$ is a $\RGF$ there exists an integer $d\leq a$ such that $f_d=f_c$ because of the choice of $b$. Thus, $f$ contains the pattern $212$ in $d<a+1<c$ and this is a contradiction. If $f_b$ is not a left-to-right maximum letter, then we have some integer $e<a$ such that $f_e=f_b$, hence $f$ contains the pattern $212$ in $e<a+1<b$ and this is also a contradiction. Therefore, $f$ is weakly uni-modal. The converse implication is clearly true. 
\end{proof}

	
	\section{The Exhaustive Generation}
	\label{secalgo}
	We used the results presented here -- and in particular the construction shown in the proof of the recurrence relation for the sets $\mathcal{RSP}(n,k)$ -- to implement an algorithm to generate these objects, that is, an algorithm which, for any fixed integer $n$, solves the following problem:
	
	\textbf{Problem:} Run-Sorted-Permutations-Generation
	
	\textbf{Input:} an integer $n$
	
	\textbf{Output:} the set of all run-sorted permutations $\mathcal{RSP}(n)$, partitioned into the subsets $\mathcal{RSP}(n,k)$.
	
	Compared to previous, more naive algorithms, this algorithm has allowed some researchers to extend the range of calculations performed in acceptable time and confirm various conjectures for a larger value of $n$. Rather than implementing recursive algorithms we made use of dynamic programming and obtained iterative algorithms. All algorithms implements a partition as a list of integers and a set of partitions as a list of partitions and hence as a list of lists of integers. Algorithm 1 is used to generate run-sorted permutations in $\mathcal{RSP}^{(1)}(n,k)\subseteq \mathcal{RSP}(n,k)$ from $\mathcal{RSP}(n-1,k)$ based on the idea presented in section \ref{sec3}. 
	\begin{algorithm}[H]
		\caption{Exhaustive Generation of Run-Sorted Permutations in $\mathcal{RSP}^{(1)}(n,k)$ from the partitions of $\mathcal{RSP}(n-1,k)$}
		\begin{algorithmic}
			\STATE\textbf{Procedure:} \texttt{FUNCTION\_ONE}$(\mathcal{RSP},n)$
			\ENSURE $\mathcal{RSP}$ is a list of lists whose elements represent run-sorted permutations of size $n-1$, and $n$ is the integer to be inserted so that the number of blocks remains the same.
			\STATE $L\longleftarrow [ ~]$
			
			\FOR{$\pi$ \textbf{in} $\mathcal{RSP}$ }
			\FOR{ $t$ \textbf{in} $Range(Length(\pi)-1)$}
			\IF{ $\pi[t]>\pi[t+1]$ }
			\STATE $\pi'\longleftarrow \pi.Insert(t+1, n)$
			\STATE $L.Append(\pi')$
			\ENDIF
			\ENDFOR
			\STATE $\pi'\longleftarrow\pi.Append(n)$
			\STATE $L.Append(\pi')$
			\ENDFOR 
			\RETURN $L$
		\end{algorithmic}
	\end{algorithm}
	
	The exhaustive generation of run-sorted permutations in $\mathcal{RSP}^{(2)}(n,k)$ starts from the set $[n-2]\times \mathcal{RSP}(n-2,k-1)$, that is, from a pair $(i,\pi)\in [n-2]\times \mathcal{RSP}(n-2,k-1)$ we obtain a run-sorted permutation $\pi'\in \mathcal{RSP}^{(2)}(n,k)$ using Algorithm 2. The idea is based on the operation given in section \ref{sec3}.
	\begin{algorithm}[H]
		\caption{Generation of a run-sorted permutation of $\mathcal{RSP}^{(2)}(n,k)$ from an element of $[n-2]\times \mathcal{RSP}(n-2,k-1)$}
		\begin{algorithmic}
			\STATE\textbf{Procedure:} \texttt{RUN\_SORTED\_PERMUTATION\_SIZE\_INC\_BY\_TWO}($(\pi, p)$)
			\ENSURE $\pi$ is a run-sorted permutation in $\mathcal{RSP}(n-2,k-1)$ and $p$ is an integer in $[n-2]$.
			\FOR{$t$ \textbf{in} $Range(Length(\pi))$}
			\IF {$\pi[t]>p$}
			\STATE $\pi[t]\longleftarrow \pi[t]+1$
			\ENDIF
			\ENDFOR
			\STATE $pos=Length(\pi)-1$
			\WHILE{$\pi[pos]>p$}
			\STATE $pos\longleftarrow pos-1$
			\ENDWHILE
			\STATE $\pi.Insert(pos+1, Length(\pi)+2)$
			\STATE $\pi.Insert(pos+2, p+1)$
			\RETURN $(\pi)$ 
		\end{algorithmic}
	\end{algorithm}
	
	Algorithm 3 calls Algorithm 2 and gives us the exhaustive generation algorithm for the set of run-sorted permutations in $\mathcal{RSP}^{(2)}(n,k)$.
	\begin{algorithm}[H]
		\caption{Exhaustive Generation of Run-sorted Permutations in $\mathcal{RSP}^{(2)}(n,k)$}
		\begin{algorithmic}
			\STATE\textbf{Procedure:} \texttt{FUNCTION\_TWO}$(\mathcal{RSP})$
			\ENSURE $\mathcal{RSP}$ is a list of lists whose elements represent a run-sorted permutation of $\mathcal{RSP}(n-2,k-1)$.
			\STATE $L\leftarrow [ ~]$
			\FOR{$\pi$ \textbf{in} $\mathcal{RSP}$}
			\FOR{$p$ \textbf{in} $Range(1, Length(\pi)+1)$}
			\STATE $\pi'\longleftarrow$ \texttt{RUN\_SORTED\_PERMUTATION\_SIZE\_INC\_BY\_TWO}$(\pi,p)$
			\STATE $L.Append(\pi')$
			\ENDFOR
			\ENDFOR
			\RETURN $L$
		\end{algorithmic}
	\end{algorithm}
	
	Now we present the main exhaustive generation algorithm that generates all and only those run-sorted permutations in $\mathcal{RSP}(n)$ for all possible $n$. The algorithm returns a list of lists of lists of integers, namely the list $\mathcal{RSP}(n)=[\mathcal{RSP}(n,1), \mathcal{RSP}(n,2), \ldots, \mathcal{RSP}(n,n)]$ where each element $\mathcal{RSP}(n,k)$ is the list of all run-sorted permutations of $[n]$ having $k$ runs.
	Since $\mathcal{RSP}(n,k)=\varnothing$ if $k>\lceil{\frac{n}{2}}\rceil$, the algorithm can be optimized by competing only those sets $\mathcal{RSP}(n,k)$ that are not empty.
	As we said, the algorithm is based on dynamic programming. We stock the values of the lists $\mathcal{RSP}(n-1)$ and $\mathcal{RSP}(n-2)$ and use them to compute the list $\mathcal{RSP}(n)$.
	In order to save memory, only the last two lists are kept at any time: the list $\mathcal{RSP}(n-1)$ will be stocked in a variable called \texttt{LastRow} and the list $\mathcal{RSP}(n-2)$ will be stocked in a variable called \texttt{RowBeforeLast}, while list $\mathcal{RSP}(n)$ will be affected to the variable \texttt{CurrentRow}.
	At the end of each iteration, the three variables are shifted.
	\begin{algorithm}[H]
		\caption{Exhaustive Generation of Run-Sorted Permutations}
		\begin{algorithmic}
			\STATE\textbf{Procedure:} \texttt{RUN\_SORTED\_PERMUTATIONS}($n$)
			\ENSURE $\mathcal{RSP}$ is a list of lists whose elements represent a run-sorted permutation of $\mathcal{RSP}(n-2,k-1)$.
			\STATE \texttt{RowBeforeLast}$\longleftarrow[[[1]]]$
			\STATE \texttt{LastRow}$\longleftarrow[[[1,2]],[~]]$
			\FOR{$i$ \textbf{in} $Range(3,n+1)$}
			\STATE \texttt{CurrentRow}$\longleftarrow[~]$
			\STATE \texttt{CurrentRow}$.Append$(\texttt{FUNCTION\_ONE}(\texttt{LastRow}$[0],i$))
			\FOR{$j$ \textbf{in} $Range(1,\lceil\frac{i}{2}\rceil)$}
			\STATE \texttt{CurrentRow}$.Append($\texttt{FUNCTION\_ONE}(\texttt{LastRow}$[j],i$)
			\STATE \texttt{CurrentRow}$.Append($\texttt{FUNCTION\_TWO}$($\texttt{RowBeforeLast}$[j-1]))$
			\ENDFOR
			\STATE \texttt{RowBeforeLast}$\longleftarrow$ \texttt{LastRow}
			\STATE \texttt{LastRow}$\longleftarrow$ \texttt{CurrentRow}
			\ENDFOR
		\end{algorithmic}
	\end{algorithm}
	
	All these algorithms have been implemented in Python.

	\begin{example}
		When \texttt{RUN\_SORTED\_PERMUTATIONS}($n$) is executed for $n=5$  we get the list $\mathcal{RSP}(5)$: 
		\begin{align*}
			&[[[1,2,3,4,5]],\\
			&[[1,3,4,5,2], [1,3,4,2,5], [1,3,5,2,4], [1,3,2,4,5], [1,4,5,2,3], [1,4,2,3,5], [1,2,4,5,3], \\
			&[1,2,4,3,5], [1,5,2,3,4], [1,2,5,3,4], [1,2,3,5,4]],\\
			& [[1,5,2,4,3], [1,4,2,5,3], [1,3,2,5,4]], \\
			&[~]].\end{align*}
		Observe that $a_{5,1}=1, a_{5, 2}=11, a_{5, 3}=3, a_{5,4}=0$, and $a_5=1+11+3+0=15$.
	\end{example}
	
	\section{Acknowledgements}
	The first author is grateful for the financial support extended by IRIF, the cooperation agreement between International Science Program (ISP) at Uppsala University and Addis Ababa University, and the support by the Wenner-Gren Foundations. We appreciate the hospitality we got from IRIF during the research visits of the first author. We also thank our colleagues from CoRS (Combinatorial Research Studio) for valuable discussions and comments, and in particular we thank Dr.\ Per Alexandersson and Prof.\ J\"orgen Backelin for their useful discussions and suggestions.
	
	
\bigskip
\hrule
\bigskip
2010 {\it Mathematics Subject Classification}:
Primary 05A05; Secondary 05A15, 05A19.
 
\emph{Keywords:} merging-free partition, canonical form, run-sorted permutation, non-crossing partition, algorithm.

\end{document}